\newtheorem{theorem}{Theorem}[section]
\newtheorem{corollary}[theorem]{Corollary}
\newtheorem{remark}[theorem]{Remark}
\newtheorem{lemma}[theorem]{Lemma}
\begin{document}

\title[Arithmetic Properties of Singular Overpartitions]{Arithmetic Properties of Andrews' Singular Overpartitions}
\author[S.-C. Chen]{Shi-Chao Chen}
\address{Institute of Contemporary Mathematics, Department of Mathematics
and Information Sciences, Henan University, Kaifeng, 475001,
China, schen@henu.edu.cn}
\author[M. D. Hirschhorn]{Michael D. Hirschhorn}
\address{School of Mathematics and Statistics, UNSW, Sydney 2052, Australia, \hfill \break m.hirschhorn@unsw.edu.au}
\author[J. A. Sellers]{James A. Sellers}
\address{Department of Mathematics, Penn State University, University Park, PA  16802, USA, sellersj@psu.edu}
\thanks{The first author was supported by the NSF of China (No.11101123).}

\date{\today}

\begin{abstract}
In a very recent work, G. E. Andrews defined the combinatorial objects which he called {\it singular overpartitions} with the goal of presenting a general theorem for overpartitions which is analogous to theorems of Rogers--Ramanujan type for ordinary partitions with restricted successive ranks.  As a small part of his work, Andrews noted two congruences modulo 3 which followed from elementary generating function manipulations.  In this work, we show that Andrews' results modulo 3 are two examples of an infinite family of congruences modulo 3 which hold for that particular function.  We also expand the consideration of such arithmetic properties to other functions which are part of Andrews' framework for singular overpartitions. 
\end{abstract}

\maketitle


\noindent 2010 Mathematics Subject Classification: 05A17, 11P83
\bigskip

\noindent Keywords: singular overpartition, congruence, generating function, sums of squares

\section{Introduction}
\label{intro}
In a very recent work, Andrews \cite{A_sing} defined the combinatorial objects which he called {\it singular overpartitions} with the goal of presenting a general theorem for overpartitions which is analogous to theorems of Rogers--Ramanujan type for ordinary partitions with restricted successive ranks.   In the process, Andrews proves that these singular overpartitions, which depend on two parameters $k$ and $i,$ can be enumerated by the function $\overline{C}_{k,i}(n)$ which gives the number of overpartitions of $n$ in which no part is divisible by $k$ and only parts $\equiv \pm i \pmod{k}$ may be overlined.  Andrews also notes that, for all $n\geq 0,$ $\overline{C}_{3,1}(n) = \overline{A}_3(n)$ where $\overline{A}_3(n)$ is the number of overpartitions of $n$ into parts not divisible by 3.  The function $\overline{A}_k(n),$ which counts the number of overpartitions of $n$ into parts not divisible by $k$, plays a key role in the work of Lovejoy \cite{L}. 

As part of his work, Andrews \cite{A_sing} uses elementary generating function manipulations to prove that, for all $n\geq 0,$ 
\begin{equation}
\label{And_congs_mod3}
\overline{C}_{3,1}(9n+3) \equiv \overline{C}_{3,1}(9n+6) \equiv 0 \pmod{3}.  
\end{equation} 
In Section \ref{C31_section}, we prove (\ref{And_congs_mod3}) as part of an infinite family of mod 3 congruences satisfied by $\overline{C}_{3,1}(n).$  
We also prove a number of arithmetic properties modulo powers of 2 satisfied by $\overline{C}_{3,1}(n).$   In Section \ref{C41_section}, we prove similar results for $\overline{C}_{4,1}(n)$ while in Section \ref{C6_section}, we prove a wide variety of results for $\overline{C}_{6,1}(n)$ and $\overline{C}_{6,2}(n),$ respectively.  All of the proofs will follow from elementary generating function considerations and $q$--series manipulations.  

Before we transition to our proofs, we note that, for $k\geq 3$ and $1\leq i\leq \lfloor \frac{k}{2}\rfloor,$ the generating function for $\overline{C}_{k,i}(n)$ is given by 
\begin{equation}
\label{main_genfn}
\sum_{n=0}^\infty \overline{C}_{k,i}(n)q^n = \frac{(q^k;q^k)_\infty(-q^i;q^k)_\infty(-q^{k-i};q^k)_\infty}{(q;q)_\infty}
\end{equation}
where 
$$
(A;q)_n = (1-A)(1-Aq)\dots (1-Aq^{n-1})
$$
and 
$$
(A;q)_\infty = \lim_{n\to\infty} (A;q)_n.
$$
For certain values of $k$ and $i,$ (\ref{main_genfn}) can be manipulated in elementary ways to generate the Ramanujan--like congruences which appear in this paper.

\section{Results for $\overline{C}_{3,1}(n)$}
\label{C31_section}

Motivated by Andrews, we first focus on the generating function for $\overline{C}_{3,1}(n)$ which, according to (\ref{main_genfn}), is given by 
\begin{eqnarray*}
\sum_{n=0}^\infty\overline{C}_{3,1}(n)q^n
&=& 
\frac{(q^3;q^3)_\infty(-q;q^3)_\infty(-q^2;q^3)_\infty}{(q;q)_\infty}\\
&=& 
\frac{(q^3;q^3)_\infty(-q;q)_\infty}{(q;q)_\infty(-q^3;q^3)_\infty}\\
&=& 
\frac{(q^3;q^3)_\infty^2(q^2;q^2)_\infty}{(q;q)_\infty^2(q^6;q^6)_\infty}\\
&=& 
\frac{(q^3;q^3)_\infty^2}{(q^6;q^6)_\infty}\Big{/}\frac{(q;q)_\infty^2}{(q^2;q^2)_\infty}\\
&=& \frac{\varphi(-q^3)}{\varphi(-q)},
\end{eqnarray*}
where $\varphi(q)$ is Ramanujan's theta function given by 
\begin{equation}
\label{phi_defn}
\varphi(q) = 1+2\sum_{n\geq 1}q^{n^2}.
\end{equation}
Given that
\begin{equation}
\label{genfn31phis}
\sum_{n=0}^\infty\overline{C}_{3,1}(n)q^n = \frac{\varphi(-q^3)}{\varphi(-q)},
\end{equation}
we can see rather quickly how one might develop congruences modulo 3 which are satisfied by $\overline{C}_{3,1}(n).$  

\begin{theorem}
\label{C31mod3general}
Let $N$ be a positive integer which is not expressible as the sum of two nonnegative squares.  Then 
$$
\overline{C}_{3,1}(N) \equiv 0 \pmod{3}.  
$$
\end{theorem}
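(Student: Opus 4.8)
The plan is to work modulo 3 with the generating function
$$
\sum_{n=0}^\infty \overline{C}_{3,1}(n)q^n = \frac{\varphi(-q^3)}{\varphi(-q)}.
$$
The key observation is that $\varphi(-q^3)$ only contributes terms whose exponents are multiples of 3, so the arithmetic content of the congruence should come from the reciprocal $1/\varphi(-q)$ together with a mod 3 simplification.

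First I would reduce $\varphi(-q^3)$ modulo 3. Since $\varphi(q) = 1 + 2\sum_{n\geq 1}q^{n^2}$, raising to a cube and using the Frobenius-type congruence $\varphi(q)^3 \equiv \varphi(q^3) \pmod 3$ (which follows from the freshman's dream $(a+b)^3 \equiv a^3+b^3$ applied termwise to the theta series), I expect to get $\varphi(-q^3) \equiv \varphi(-q)^3 \pmod 3$. Substituting this into the generating function would give
$$
\sum_{n=0}^\infty \overline{C}_{3,1}(n)q^n = \frac{\varphi(-q^3)}{\varphi(-q)} \equiv \frac{\varphi(-q)^3}{\varphi(-q)} = \varphi(-q)^2 \pmod 3.
$$
This is the crucial collapse: modulo 3 the generating function becomes simply $\varphi(-q)^2$.

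Next I would interpret $\varphi(-q)^2$ combinatorially. By definition $\varphi(-q)^2 = \left(1 + 2\sum_{n\geq 1}(-1)^n q^{n^2}\right)^2$, so its coefficient of $q^N$ counts (with signs and a factor depending on $2$) the number of ways of writing $N = a^2 + b^2$ as an ordered sum of two squares with $a,b \geq 0$. More precisely, $\varphi(-q)^2 = \sum_{a,b} (-1)^{a+b} q^{a^2+b^2}$, whose $q^N$ coefficient is $\pm$ (the number of representations of $N$ as a sum of two squares). If $N$ is \emph{not} expressible as a sum of two nonnegative squares, then this coefficient is $0$, and hence $\overline{C}_{3,1}(N) \equiv 0 \pmod 3$, which is exactly the claim. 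I would need to be careful that the sign factor $(-1)^{a+b}$ does not accidentally allow cancellation to hide a representation — but the point is the reverse implication: no representation forces the coefficient to vanish identically, regardless of sign, so the logic runs in the safe direction.

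The main obstacle I anticipate is justifying the congruence $\varphi(-q^3) \equiv \varphi(-q)^3 \pmod 3$ cleanly as an identity of formal power series with integer coefficients, and being precise about the sign conventions introduced by the argument $-q$ versus $q$ inside the theta function. One must check that replacing $q$ by $-q$ commutes appropriately with the cubing step, i.e. that $\varphi(-q)^3 \equiv \varphi((-q)^3) = \varphi(-q^3) \pmod 3$, which it does since $(-q)^3 = -q^3$. Once this is in hand, the rest is purely a matter of reading off that a nonrepresentable $N$ yields a zero coefficient in $\varphi(-q)^2$; this last step is immediate and requires no computation. Finally, I would remark that Andrews' two congruences in $(\ref{And_congs_mod3})$ fall out as special cases, since $9n+3$ and $9n+6$ are never sums of two squares (being $\equiv 3 \pmod{?}$-type obstructions), which situates the theorem as the promised generalization.
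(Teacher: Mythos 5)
Your proposal is correct and is essentially the paper's own proof: both reduce the generating function to the square of a theta function via the Frobenius congruence $\varphi(x)^3\equiv\varphi(x^3)\pmod 3$ and then read off that a nonrepresentable $N$ gives a vanishing coefficient. The only cosmetic difference is that the paper first replaces $q$ by $-q$ to work with $\varphi(q^3)/\varphi(q)\equiv\varphi(q)^2$ and the signed series $\sum(-1)^n\overline{C}_{3,1}(n)q^n$, whereas you keep the signs inside $\varphi(-q)^2$; the logic is identical.
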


\begin{proof}
Note that 
\begin{eqnarray*}
\sum_{n=0}^\infty(-1)^n\overline{C}_{3,1}(n)q^n
&=& 
\frac{\varphi(q^3)}{\varphi(q)}\\
&\equiv &
\frac{\varphi(q)^3}{\varphi(q)}\pmod3\\
&=& 
\varphi(q)^2\\
&=& 
\left (1+2\sum_{n=1}^\infty q^{n^2}\right )^2\\
&\equiv &
1+\sum_{n=1}^\infty q^{n^2}+\sum_{m,n=1}^\infty q^{m^2+n^2} \pmod{3}.
\end{eqnarray*}
The result follows.  
\end{proof}
Let $r_2(n)$ be the number of representations of $n$ as the sum of two squares. From the proof of Theorem \ref{C31mod3general}, we know
$$\overline{C}_{3,1}(n)\equiv(-1)^nr_2(n)\pmod3.$$
Recall the well--known formula for $r_2(n),$ as noted in \cite{by}, which states
$$r_2(n)=4\prod_{\substack{p|n\\p\equiv1\pmod4}}(1+\nu_p(n))\prod_{\substack{p|n\\p\equiv3\pmod4}}\frac{1+(-1)^{\nu_p(n)}}{2},$$
where $p$ is prime and $\nu_p(n)$ is the exponent of $p$ dividing $n$. In light of this formula for $r_2(n),$ we have the following corollaries of Theorem \ref{C31mod3general}.

\begin{corollary}
\label{cor1mod3}
For all $k,m\geq 0,$ 
$$
\overline{C}_{3,1}(2^k(4m+3)) \equiv 0 \pmod{3}.
$$
\end{corollary}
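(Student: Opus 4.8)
The plan is to deduce Corollary~\ref{cor1mod3} directly from Theorem~\ref{C31mod3general}, whose proof established the pointwise congruence $\overline{C}_{3,1}(n)\equiv(-1)^n r_2(n)\pmod3$. Since reducing modulo $3$ is unaffected by the sign $(-1)^n$, it suffices to show that $r_2(N)\equiv 0\pmod3$ whenever $N=2^k(4m+3)$. My strategy is therefore to apply the explicit product formula for $r_2(n)$ recalled immediately before the corollary and locate a single factor in that product that vanishes modulo $3$.

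First I would record the relevant arithmetic of $N=2^k(4m+3)$. The key observation is that $4m+3\equiv 3\pmod4$, so $N$ has a prime factor congruent to $3\pmod4$ appearing to an odd power. Concretely, write the factorization of $4m+3$ into primes; since $4m+3\equiv 3\pmod4$, the primes $p\equiv3\pmod4$ dividing $N$ cannot all occur to even exponents, for otherwise the product of the prime powers would be a square times a product of primes $\equiv1\pmod4$ and a power of $2$, forcing $N\equiv0,1,2\pmod4$ rather than $\equiv 3\pmod 4$ in its odd part. Thus there exists a prime $p\equiv3\pmod4$ with $\nu_p(N)$ odd.

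Next I would feed this into the formula
$$
r_2(N)=4\prod_{\substack{p\mid N\\p\equiv1\pmod4}}(1+\nu_p(N))\prod_{\substack{p\mid N\\p\equiv3\pmod4}}\frac{1+(-1)^{\nu_p(N)}}{2}.
$$
For the prime $p\equiv3\pmod4$ with $\nu_p(N)$ odd, the corresponding factor is $\tfrac{1+(-1)^{\text{odd}}}{2}=0$, so the entire product vanishes and $r_2(N)=0$. In particular $r_2(N)\equiv0\pmod3$, and hence $\overline{C}_{3,1}(N)\equiv(-1)^N\cdot 0\equiv0\pmod3$, which is the claimed congruence.

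The main obstacle, such as it is, lies in the second paragraph: justifying cleanly that an integer whose odd part is $\equiv3\pmod4$ must carry some prime $\equiv3\pmod4$ to an odd power. This is the classical characterization of sums of two squares (an integer is a sum of two squares iff every prime $\equiv3\pmod4$ in its factorization occurs to an even power), and indeed one could bypass the product formula entirely by invoking that characterization together with Theorem~\ref{C31mod3general}: since $N=2^k(4m+3)$ fails that criterion, $N$ is not a sum of two squares, and the theorem immediately gives $\overline{C}_{3,1}(N)\equiv0\pmod3$. I expect the published proof to take exactly this shortcut, reading the corollary off the theorem rather than re-deriving the arithmetic, so the only real content is verifying the parity-of-exponent claim via the residue of $4m+3$ modulo $4$.
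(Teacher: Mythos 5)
Your proposal is correct and follows exactly the route the paper intends: the paper states this corollary as an immediate consequence of Theorem~\ref{C31mod3general} and the quoted product formula for $r_2(n)$, which is precisely your argument that the odd part $4m+3\equiv 3\pmod 4$ forces some prime $p\equiv 3\pmod 4$ to occur to an odd power, making $r_2(2^k(4m+3))=0$. Your parity-of-exponent verification simply fills in the details the paper leaves implicit, so there is no substantive difference between the two proofs.
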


\begin{corollary}
Let $p\equiv1\pmod4$ be prime. Then for all  $k,m\ge0$ with $p\nmid m,$
$$\overline{C}_{3,1}(p^{3k+2}m)\equiv0\pmod3.$$
\end{corollary}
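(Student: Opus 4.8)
The plan is to reduce the statement entirely to a divisibility fact about $r_2$, since the congruence $\overline{C}_{3,1}(n)\equiv(-1)^n r_2(n)\pmod 3$ recorded just after Theorem \ref{C31mod3general} already transfers the problem from the overpartition function to the sum-of-two-squares function. Thus it suffices for me to show that $r_2(p^{3k+2}m)\equiv 0\pmod 3$ for every prime $p\equiv 1\pmod 4$ and every $m$ with $p\nmid m$; note that the hypothesis $p\nmid m$ automatically forces $m\geq 1$, since every prime divides $0$.

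To do this, I would substitute $n=p^{3k+2}m$ into the explicit product formula for $r_2(n)$ quoted above. The key observation is that $p\nmid m$ forces $\nu_p(p^{3k+2}m)=3k+2$, so the prime $p$ contributes to the formula exactly the factor $1+\nu_p(n)=1+(3k+2)=3(k+1)$. Since $p\equiv 1\pmod 4$, this factor sits inside the first product (the one running over primes $\equiv 1\pmod 4$) rather than the second, and it is visibly a multiple of $3$. As all of the remaining factors in the formula are integers, the entire value $r_2(p^{3k+2}m)$ is divisible by $3$.

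Given the cleanness of this argument, I do not expect a serious obstacle; the only point requiring a word of care is the degenerate possibility that $r_2(p^{3k+2}m)=0$, which occurs precisely when some prime $q\equiv 3\pmod 4$ divides $m$ to an odd power and thereby kills the corresponding factor $\tfrac{1+(-1)^{\nu_q(n)}}{2}$. But $0$ is trivially divisible by $3$, so the conclusion holds in that case as well. Combining the two cases with $\overline{C}_{3,1}(n)\equiv(-1)^n r_2(n)\pmod 3$ then yields $\overline{C}_{3,1}(p^{3k+2}m)\equiv 0\pmod 3$, as desired.
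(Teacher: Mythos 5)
Your proof is correct and is exactly the argument the paper intends: you invoke the congruence $\overline{C}_{3,1}(n)\equiv(-1)^n r_2(n)\pmod 3$ recorded after Theorem \ref{C31mod3general} together with the quoted product formula for $r_2$, observing that $p\nmid m$ forces $\nu_p(n)=3k+2$, so the factor $1+\nu_p(n)=3(k+1)$ makes $r_2(n)\equiv 0\pmod 3$. The paper leaves this corollary's proof implicit (``in light of this formula for $r_2(n)$\dots''), and your write-up, including the harmless degenerate case $r_2(n)=0$, fills in precisely that reasoning.
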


\begin{corollary}
\label{cor2mod3}
Let $p\equiv3\pmod4$ be prime. Then for all $k,m\ge0$ with $p\nmid m,$
$$\overline{C}_{3,1}(p^{2k+1}m)\equiv0\pmod3.$$
\end{corollary}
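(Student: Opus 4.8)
The plan is to deduce this directly from the explicit formula for $r_2(n)$ together with the congruence $\overline{C}_{3,1}(n)\equiv(-1)^n r_2(n)\pmod 3$ recorded in the discussion following Theorem \ref{C31mod3general}. Set $N=p^{2k+1}m$ with $p\equiv 3\pmod 4$ prime and $p\nmid m$. Because $p\nmid m$, the exact power of $p$ dividing $N$ is $\nu_p(N)=2k+1$, which is odd, and this odd exponent is the whole point of the argument.

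First I would isolate the factor in the product formula for $r_2(N)$ that corresponds to the prime $p$. Since $p\equiv 3\pmod 4$ and $p\mid N$, this factor is
$$\frac{1+(-1)^{\nu_p(N)}}{2}=\frac{1+(-1)^{2k+1}}{2}=0.$$
A single vanishing factor forces the entire product, and hence $r_2(N)$, to be zero. Substituting $r_2(N)=0$ into $\overline{C}_{3,1}(N)\equiv(-1)^N r_2(N)\pmod 3$ then yields $\overline{C}_{3,1}(N)\equiv 0\pmod 3$, as desired.

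Alternatively, one can bypass the formula altogether and invoke Theorem \ref{C31mod3general} directly. By the classical characterization of sums of two squares, an integer whose prime factorization contains a prime $p\equiv 3\pmod 4$ raised to an odd power cannot be written as a sum of two nonnegative squares. Since $N=p^{2k+1}m$ has exactly this shape, Theorem \ref{C31mod3general} applies immediately and gives the congruence.

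There is essentially no technical obstacle here: the content lies entirely in observing that the odd exponent $2k+1$ annihilates the relevant factor of $r_2$. The only point requiring a moment of care is confirming that $\nu_p(N)$ equals precisely $2k+1$ and not something larger, which is exactly the reason for imposing the hypothesis $p\nmid m$.
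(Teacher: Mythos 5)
Your proof is correct and follows essentially the same route as the paper: the corollary is stated there as an immediate consequence of the congruence $\overline{C}_{3,1}(n)\equiv(-1)^n r_2(n)\pmod 3$ and the product formula for $r_2(n)$, in which the factor $\frac{1+(-1)^{\nu_p(N)}}{2}$ vanishes because $\nu_p(N)=2k+1$ is odd. Your alternative appeal to Theorem \ref{C31mod3general} via the classical two-squares theorem is equally valid and is just a repackaging of the same observation.
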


\begin{remark}
Note that Andrews' original congruences modulo 3, as given in (\ref{And_congs_mod3}), are the $p = 3, k = 0$ and $m\equiv 1,2\pmod3$ cases  of Corollary \ref{cor2mod3}.
\end{remark}

We now transition to a consideration of congruence results satisfied by $\overline{C}_{3,1}(n)$ modulo small powers of 2.  We begin with a lemma which will allow us to obtain an alternate form of the generating function for $\overline{C}_{3,1}(n)$ from which we can obtain such results.  

\begin{lemma}
$\varphi(-q^2)^2 = \varphi(q)\varphi(-q)$ where $\varphi(q)$ is defined in (\ref{phi_defn}).  
\end{lemma}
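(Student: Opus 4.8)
The plan is to prove the identity $\varphi(-q^2)^2 = \varphi(q)\varphi(-q)$ by recognizing both sides as products of Ramanujan theta functions and reducing the statement to a known product formula for $\varphi(q)$. First I would recall the standard infinite product representation of Ramanujan's theta function, namely
\[
\varphi(q) = \frac{(q^2;q^2)_\infty^5}{(q;q)_\infty^2(q^4;q^4)_\infty^2}
= \frac{(-q;q^2)_\infty^2 (q^2;q^2)_\infty}{1},
\]
which follows from the Jacobi triple product applied to $\varphi(q)=\sum_{n=-\infty}^\infty q^{n^2}$. The most efficient route uses the compact form $\varphi(q) = (-q;q^2)_\infty^2\,(q^2;q^2)_\infty$.

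With this product in hand, the proof becomes a direct computation. On the right-hand side I would write $\varphi(q)\varphi(-q)$ using the product formula for each factor, noting that replacing $q$ by $-q$ turns $(-q;q^2)_\infty$ into $(q;q^2)_\infty$ while leaving $(q^2;q^2)_\infty$ unchanged. This gives
\[
\varphi(q)\varphi(-q) = (-q;q^2)_\infty^2(q;q^2)_\infty^2 (q^2;q^2)_\infty^2
= \big[(-q;q^2)_\infty(q;q^2)_\infty\big]^2 (q^2;q^2)_\infty^2.
\]
The key simplification is the telescoping identity $(-q;q^2)_\infty(q;q^2)_\infty = (q^2;q^4)_\infty$, which comes from pairing $(1-q^{2j-1})(1+q^{2j-1}) = 1-q^{4j-2}$. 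Substituting yields $\varphi(q)\varphi(-q) = (q^2;q^4)_\infty^2 (q^2;q^2)_\infty^2$. On the left-hand side, the product formula gives $\varphi(-q^2) = (q^2;q^4)_\infty^2(q^4;q^4)_\infty$, so $\varphi(-q^2)^2 = (q^2;q^4)_\infty^4(q^4;q^4)_\infty^2$, and I would match the two expressions after a further factoring of $(q^2;q^2)_\infty = (q^2;q^4)_\infty(q^4;q^4)_\infty$.

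The main obstacle, modest as it is, lies in keeping the bookkeeping of the various $q$-Pochhammer products straight and correctly handling the substitution $q \mapsto -q$ in each factor; a sign error there would derail the whole identity. An attractive alternative that avoids infinite products entirely is to argue at the level of the defining series: since $\varphi(q)\varphi(-q) = \sum_{m,n} (-1)^n q^{m^2+n^2}$, one can split according to the parity of $m+n$ and use the substitution $(m,n)\mapsto(m+n, m-n)$ (or its inverse) to show directly that only terms with $m^2+n^2$ even survive and that they reassemble into $\sum_{m,n}(-1)^{m+n} q^{2(m^2+n^2)} = \varphi(-q^2)^2$. I would present the product-based computation as the primary proof for its brevity, but I expect either route to close the argument cleanly.
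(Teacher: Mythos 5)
Your proof is correct and takes essentially the same approach as the paper: both express $\varphi(q)$, $\varphi(-q)$, and $\varphi(-q^2)$ as $q$-Pochhammer products and finish with routine product manipulation. The paper works directly with the eta-quotient forms $\varphi(q)=\frac{(q^2;q^2)_\infty^5}{(q;q)_\infty^2(q^4;q^4)_\infty^2}$ and $\varphi(-q)=\frac{(q;q)_\infty^2}{(q^2;q^2)_\infty}$, so the identity collapses in a one-line cancellation, whereas your triple-product form $(-q;q^2)_\infty^2(q^2;q^2)_\infty$ requires the extra telescoping and dissection steps, but the underlying method is the same.
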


\begin{proof}
$$
\varphi(q)\varphi(-q)=\frac{(q^2;q^2)_\infty^5}{(q;q)_\infty^2(q^4;q^4)_\infty^2}\cdot\frac{(q;q)_\infty^2}{(q^2;q^2)_\infty}
=\frac{(q^2;q^2)_\infty^4}{(q^4;q^4)_\infty^2}=\varphi(-q^2)^2.
$$
\end{proof}

\begin{corollary}
\label{corA}
$$
\frac{1}{\varphi(-q)} = \frac{\varphi(q)}{\varphi(-q^2)^2}.
$$
\end{corollary}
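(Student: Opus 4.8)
The plan is to obtain this identity as an immediate algebraic rearrangement of the preceding Lemma, so essentially no new ideas are required. The Lemma gives
$$
\varphi(-q^2)^2 = \varphi(q)\varphi(-q),
$$
and the assertion of the corollary is just this same identity solved for $1/\varphi(-q)$. Concretely, I would divide both sides of the Lemma by the product $\varphi(-q)\,\varphi(-q^2)^2$. On the left this leaves $1/\varphi(-q)$, while on the right the factor $\varphi(-q)$ cancels against one of its copies, leaving $\varphi(q)/\varphi(-q^2)^2$, which is exactly the claimed formula.

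The only point meriting a word of justification is that the divisions are legitimate. Here every quantity is a formal power series in $q$, and by the definition in (\ref{phi_defn}) each of $\varphi(q)$, $\varphi(-q)$, and $\varphi(-q^2)$ has constant term $1$; hence each is a unit in the ring of formal power series and may be inverted freely. Thus there is no real obstacle: the statement is a direct consequence of the Lemma, recorded separately only because the reciprocal form $1/\varphi(-q) = \varphi(q)/\varphi(-q^2)^2$ is the shape that will be convenient when it is substituted into the generating function (\ref{genfn31phis}) for $\overline{C}_{3,1}(n)$ to extract congruences modulo powers of $2$.
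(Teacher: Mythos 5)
Your proposal is correct and matches the paper's argument: the paper likewise derives the corollary as an immediate consequence of the lemma $\varphi(-q^2)^2 = \varphi(q)\varphi(-q)$, calling it obvious. Your added remark that $\varphi(-q)$ and $\varphi(-q^2)$ are units in the formal power series ring (constant term $1$) is a sound justification of the same one-line rearrangement, not a different route.
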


\begin{proof}
This result is obvious based on the previous lemma.  
\end{proof}

\begin{corollary}
\label{alternate_genfn31}
$$
\sum_{n= 0}^\infty \overline{C}_{3,1}(n)q^n = \varphi(-q^3)\prod_{i=0}^\infty \varphi(q^{2^i})^{2^i}.
$$
\end{corollary}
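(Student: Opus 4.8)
The plan is to iterate the factorization in Corollary \ref{corA} and then pass to a limit. By (\ref{genfn31phis}) we have $\sum_{n=0}^\infty \overline{C}_{3,1}(n)q^n = \varphi(-q^3)/\varphi(-q)$, so it suffices to prove the identity
\[
\frac{1}{\varphi(-q)} = \prod_{i=0}^\infty \varphi(q^{2^i})^{2^i}
\]
and then multiply both sides by $\varphi(-q^3)$.

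First I would establish a finite telescoping identity by induction on $N$, namely
\[
\frac{1}{\varphi(-q)} = \left(\prod_{i=0}^{N-1}\varphi(q^{2^i})^{2^i}\right)\frac{1}{\varphi(-q^{2^N})^{2^N}}.
\]
The base case $N=1$ is precisely Corollary \ref{corA}. For the inductive step I would apply Corollary \ref{corA} with $q$ replaced by $q^{2^N}$, which gives $1/\varphi(-q^{2^N}) = \varphi(q^{2^N})/\varphi(-q^{2^{N+1}})^2$; raising this to the $2^N$ power and substituting into the inductive hypothesis absorbs one more factor into the product and advances the remainder term, completing the induction.

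Next I would let $N\to\infty$. The crucial observation is that the remainder $\varphi(-q^{2^N})^{2^N}$ tends to $1$ in the ring of formal power series: since $\varphi(-q^{2^N}) = 1 + 2\sum_{n\geq 1}(-1)^n q^{2^N n^2}$, every nonconstant term has $q$-degree at least $2^N$, so after raising to any power the nonconstant contributions lie in degrees tending to infinity. Consequently each coefficient of the remainder stabilizes to the corresponding coefficient of $1$, and the finite products converge coefficientwise to the asserted infinite product. (Equivalently, for $|q|<1$ one has $q^{2^N}\to 0$, whence $\varphi(-q^{2^N})^{2^N}\to 1$.)

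The only delicate point is this convergence argument; everything else is routine bookkeeping. I would make it fully rigorous by first noting that the infinite product $\prod_{i=0}^\infty \varphi(q^{2^i})^{2^i}$ is itself a well-defined formal power series, because each factor $\varphi(q^{2^i})^{2^i}$ differs from $1$ only in degrees $\geq 2^i$, so only finitely many factors influence any prescribed coefficient. Passing to the limit in the telescoping identity then yields $1/\varphi(-q) = \prod_{i=0}^\infty \varphi(q^{2^i})^{2^i}$, and multiplying by $\varphi(-q^3)$ gives the stated corollary.
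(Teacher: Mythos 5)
Your proof is correct and follows exactly the paper's approach: the paper's own proof is the one-line statement that one should ``use (\ref{genfn31phis}) and iterate Corollary \ref{corA} ad infinitum,'' which is precisely your telescoping induction. Your additional care in justifying the passage to the limit (the remainder $\varphi(-q^{2^N})^{-2^N} = 1 + O(q^{2^N})$ and the coefficientwise well-definedness of the infinite product) simply makes rigorous what the paper leaves implicit.
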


\begin{proof}
We simply use (\ref{genfn31phis}) and iterate Corollary \ref{corA} ad infinitum.  
\end{proof}
We can now state a few characterization theorems for $\overline{C}_{3,1}(n)$ modulo small powers of 2.  

\begin{theorem}
For all $n\geq 1,$ $\overline{C}_{3,1}(n) \equiv 0\pmod{2}.$  
\end{theorem}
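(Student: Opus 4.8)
The plan is to reduce the generating function for $\overline{C}_{3,1}(n)$ modulo $2$ and observe that it collapses to the constant $1$. The crucial, and essentially the only, ingredient is that Ramanujan's theta function satisfies $\varphi(q) \equiv 1 \pmod{2}$: from the definition (\ref{phi_defn}), $\varphi(q) = 1 + 2\sum_{n\geq 1} q^{n^2}$, so every coefficient beyond the constant term is even.

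First I would invoke the product representation furnished by Corollary \ref{alternate_genfn31},
$$
\sum_{n=0}^\infty \overline{C}_{3,1}(n) q^n = \varphi(-q^3)\prod_{i=0}^\infty \varphi(q^{2^i})^{2^i}.
$$
Substituting $-q^3$ and each $q^{2^i}$ for $q$ in the congruence $\varphi(q) \equiv 1 \pmod{2}$ shows that every factor $\varphi(-q^3)$ and $\varphi(q^{2^i})^{2^i}$ is congruent to $1$ modulo $2$. Since a product (even an infinite one, in the sense of formal power series) of series each lying in $1 + 2\mathbb{Z}[[q]]$ again lies in $1 + 2\mathbb{Z}[[q]]$, the entire right-hand side reduces to $1$ modulo $2$. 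Comparing coefficients then yields $\overline{C}_{3,1}(0) = 1$ together with $\overline{C}_{3,1}(n) \equiv 0 \pmod{2}$ for every $n \geq 1$, which is precisely the claim.

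I expect no genuine obstacle here. One could argue just as quickly from (\ref{genfn31phis}) directly, noting that $\varphi(-q) \equiv 1 \pmod{2}$ forces its reciprocal to satisfy $1/\varphi(-q) \equiv 1 \pmod{2}$ as well, so that $\varphi(-q^3)/\varphi(-q) \equiv 1 \pmod{2}$. The only point deserving a word of care is the legitimacy of reducing an infinite product, or a reciprocal power series, modulo $2$ coefficientwise; but since the relevant series all belong to $1 + 2\mathbb{Z}[[q]]$, a subset of $\mathbb{Z}[[q]]$ closed under both multiplication and inversion, this reduction is immediate and the argument is complete.
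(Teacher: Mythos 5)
Your proof is correct and rests on exactly the same observation as the paper's (one-line) proof: since $\varphi(q)\equiv 1\pmod{2}$ by (\ref{phi_defn}), the generating function (\ref{genfn31phis}) is $\equiv 1\pmod{2}$, whether you phrase this via the quotient $\varphi(-q^3)/\varphi(-q)$ or via the product form of Corollary \ref{alternate_genfn31}. Your closing remark about $1+2\mathbb{Z}[[q]]$ being closed under multiplication and inversion simply makes explicit the justification the paper leaves to the reader.
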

\begin{proof}
This follows from (\ref{genfn31phis}) and (\ref{phi_defn}).
\end{proof}

\begin{theorem}
\label{C31mod4}
For all $n\geq 1,$ 
\begin{equation*}
\overline{C}_{3,1}(n) \equiv 
\begin{cases}
2\pmod{4} & \text{if } n = k^2 \text{ or } n = 3k^2,\\
0\pmod{4} & \text{otherwise. }
\end{cases}
\end{equation*}
\end{theorem}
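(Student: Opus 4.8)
The plan is to work entirely from the product representation in Corollary~\ref{alternate_genfn31}, namely
$$
\sum_{n\geq 0}\overline{C}_{3,1}(n)q^n = \varphi(-q^3)\prod_{i=0}^\infty \varphi(q^{2^i})^{2^i},
$$
and to reduce the infinite product modulo $4$. The key observation is that only the $i=0$ factor survives nontrivially. For each $i\geq 1$ I would write $\varphi(q^{2^i})^{2^i} = (1+2X_i)^{2^i}$, where $X_i=\sum_{m\geq 1}q^{2^i m^2}$, and expand by the binomial theorem. Every term past the constant $1$ carries a factor $\binom{2^i}{j}2^j$ with $j\geq 1$, and since $2^i\geq 2$ this is divisible by $4$; hence $\varphi(q^{2^i})^{2^i}\equiv 1\pmod 4$ for all $i\geq 1$. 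Thus the tail of the product collapses and $\sum_{n\geq 0}\overline{C}_{3,1}(n)q^n \equiv \varphi(-q^3)\varphi(q)\pmod 4$, with the $i=0$ factor $\varphi(q)$ being the only one that contributes beyond $1$.

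Next I would expand this reduced product directly from the theta-series definition~(\ref{phi_defn}). Writing $\varphi(q)=1+2B$ and $\varphi(-q^3)=1+2A$ with $B=\sum_{m\geq 1}q^{m^2}$ and $A=\sum_{n\geq 1}(-1)^n q^{3n^2}$, the product is $1+2A+2B+4AB\equiv 1+2B+2A\pmod 4$. Reading off the coefficient of $q^n$ for $n\geq 1$, it equals $2$ when $n$ is a positive perfect square, it equals $2(-1)^k$ when $n=3k^2$, and it equals $0$ otherwise. Since $2(-1)^k\equiv 2\pmod 4$, the alternating sign inherited from $\varphi(-q^3)$ is invisible modulo $4$, and both exceptional types of $n$ give coefficient $\equiv 2\pmod 4$.

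The remaining step is to confirm that the two exceptional cases are disjoint, so the coefficients are unambiguous. If $n=m^2=3k^2$ for some $n\geq 1$, then comparing $3$-adic valuations gives $v_3(m^2)$ even while $v_3(3k^2)=1+2v_3(k)$ is odd, a contradiction; hence $n=k^2$ and $n=3k^2$ never coincide for $n\geq 1$. Collecting the three cases then yields exactly the claimed congruence. I do not expect a serious obstacle: the only point demanding care is the mod-$4$ vanishing of the binomial tail for $i\geq 1$, together with the elementary but essential remark that $-2\equiv 2\pmod 4$ erases the sign in the $3k^2$ terms.
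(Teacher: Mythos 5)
Your proposal is correct and follows essentially the same route as the paper: both start from Corollary~\ref{alternate_genfn31}, reduce the infinite product modulo $4$ to $\varphi(-q^3)\varphi(q)$, and read off the coefficients of the resulting theta series. Your added details (the binomial-theorem justification that $\varphi(q^{2^i})^{2^i}\equiv 1\pmod 4$ for $i\geq 1$, and the $3$-adic valuation argument that $n=k^2$ and $n=3k^2$ cannot coincide) are exactly the steps the paper leaves implicit.
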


\begin{proof}
Thanks to Corollary \ref{alternate_genfn31} and (\ref{phi_defn}) we know
\begin{eqnarray*}
\sum_{n= 0}^\infty \overline{C}_{3,1}(n) q^n
&=& 
\varphi(-q^3)\prod_{i=0}^\infty \varphi(q^{2^i})^{2^i} \\
&\equiv &
\varphi(-q^3)\varphi(q) \pmod{4} \\
&=& 
\left( 1+2\sum_{k\geq 1}(-1)^kq^{3k^2} \right)\left( 1+2\sum_{k\geq 1}q^{k^2} \right) \\
&\equiv & 
 1+2\sum_{k\geq 1}q^{k^2}+2\sum_{k\geq 1}q^{3k^2} \pmod{4}.
\end{eqnarray*}
The result follows.  
\end{proof}
It is clear that one can write down numerous Ramanujan--like congruences modulo 4 satisfied by $\overline{C}_{3,1}(n)$ thanks to Theorem \ref{C31mod4}.  We refrain from doing so here.  

Note that it is also possible to write a relatively clean characterization modulo 8 for $\overline{C}_{3,1}(n)$ using the same strategy as that employed above.  This is because 
\begin{eqnarray*}
\sum_{n= 0}^\infty \overline{C}_{3,1}(n) q^n
&=& 
\varphi(-q^3)\prod_{i=0}^\infty \varphi(q^{2^i})^{2^i} \\
&\equiv &
\varphi(-q^3)\varphi(q)\varphi(q^2)^2 \pmod{8}.
\end{eqnarray*}
With that said, we consider a slight variant, namely obtaining a clean characterization modulo 8 for $(-1)^n\overline{C}_{3,1}(n),$ which can then be used rather quickly to prove numerous Ramanujan--like congruences modulo 8. 

\begin{theorem}
\label{C31mod8}
\begin{eqnarray*}
&&\sum_{n=0}^\infty (-1)^n\overline{C}_{3,1}(n)q^n \\
&\equiv & 
 1+6\sum_{k\geq 1}q^{k^2} + 4\sum_{k \geq 1}q^{2k^2} + 2\sum_{k\geq 1}q^{3k^2}  + 4\sum_{k,\ell \geq 1}q^{k^2+3\ell^2}  \pmod{8}.
\end{eqnarray*}
\end{theorem}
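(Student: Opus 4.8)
The plan is to reduce everything to the mod $8$ form already recorded just before the statement, namely
\[
\sum_{n=0}^\infty \overline{C}_{3,1}(n)q^n \equiv \varphi(-q^3)\varphi(q)\varphi(q^2)^2 \pmod{8},
\]
which comes from Corollary \ref{alternate_genfn31} together with the fact that $\varphi(q^{2^i})^{2^i}\equiv 1\pmod 8$ for every $i\geq 2$ (each such factor is $(1+2f)^{2^i}$ with $2^i\geq 4$). Replacing $q$ by $-q$, and using $\varphi(-(-q)^3)=\varphi(q^3)$ and $\varphi((-q)^2)=\varphi(q^2)$, I obtain at once
\[
\sum_{n=0}^\infty (-1)^n\overline{C}_{3,1}(n)q^n \equiv \varphi(q^3)\varphi(-q)\varphi(q^2)^2 \pmod{8},
\]
so the theorem becomes a matter of expanding this product modulo $8$.

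I would then set $S_1=\sum_{k\geq 1}(-1)^kq^{k^2}$, $S_2=\sum_{k\geq 1}q^{2k^2}$ and $S_3=\sum_{k\geq 1}q^{3k^2}$, so that $\varphi(-q)=1+2S_1$, $\varphi(q^3)=1+2S_3$ and $\varphi(q^2)^2=1+4S_2+4S_2^2$. Multiplying these out and discarding every term divisible by $8$ (all such terms pair a factor-$4$ piece of $\varphi(q^2)^2$ with a factor-$2$ piece of $\varphi(q^3)\varphi(-q)$), the product collapses modulo $8$ to
\[
1+2S_1+2S_3+4S_2+4S_1S_3+4S_2^2 \pmod{8}.
\]
Here $2S_3$ and $4S_2$ are already the desired $2\sum_{k\geq1}q^{3k^2}$ and $4\sum_{k\geq1}q^{2k^2}$, while in $4S_1S_3=4\sum_{k,\ell\geq1}(-1)^kq^{k^2+3\ell^2}$ the sign $(-1)^k$ is immaterial once multiplied by $4$, so this term equals $4\sum_{k,\ell\geq1}q^{k^2+3\ell^2}$ modulo $8$.

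The only real content, and the step I expect to be the crux, is showing
\[
2S_1+4S_2^2\equiv 6\sum_{k\geq 1}q^{k^2}\pmod 8.
\]
For $4S_2^2=4\sum_{a,b\geq1}q^{2(a^2+b^2)}$ I would apply the involution $(a,b)\mapsto(b,a)$: the off-diagonal pairs occur in twos and contribute multiples of $8$, so only the diagonal $a=b$ survives and $4S_2^2\equiv 4\sum_{a\geq1}q^{4a^2}\pmod 8$, a sum supported precisely on the squares of even integers. Splitting $2S_1$ by the parity of $k$ gives coefficient $-2\equiv 6$ on $q^{k^2}$ for $k$ odd and $+2$ for $k$ even; adding the surviving $4\sum_{a\geq1}q^{(2a)^2}$ raises the even-$k$ coefficient from $2$ to $6$ as well, so every square acquires coefficient $6$. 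This establishes the displayed congruence, and reassembling the pieces yields the theorem. The main obstacle is thus purely the recognition that $S_2^2$ reduces to its diagonal modulo $8$ and that these diagonal terms are exactly what is needed to correct the even-index part of $2S_1$; everything else is bookkeeping of which products vanish mod $8$.
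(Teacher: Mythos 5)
Your proof is correct, but it takes a genuinely different route from the paper's. The paper works directly with $\sum_{n\geq 0}(-1)^n\overline{C}_{3,1}(n)q^n = \varphi(q^3)/\varphi(q)$: since $\varphi(q)^4\equiv 1\pmod 8$, the quotient is congruent to $\varphi(q^3)\varphi(q)^3$, and expanding $\left(1+2\sum_{k\geq1}q^{k^2}\right)^3\left(1+2\sum_{k\geq1}q^{3k^2}\right)$ produces the coefficient $6$ on the squares immediately from the binomial expansion, with the cross term $4\sum_{k,\ell\geq1}q^{k^2+\ell^2}$ reduced to $4\sum_{k\geq1}q^{2k^2}$ by noting that solutions with $k\neq\ell$ come in pairs. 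You instead start from the truncated product form $\varphi(-q^3)\varphi(q)\varphi(q^2)^2$ --- which the paper records just before the theorem but deliberately sets aside --- and apply $q\mapsto -q$ to get $\varphi(q^3)\varphi(-q)\varphi(q^2)^2$. In your expansion the bookkeeping is shuffled: the term $4\sum_{k\geq1}q^{2k^2}$ falls out directly from the $4S_2$ piece of $\varphi(q^2)^2$, while the coefficient $6$ on the squares must be assembled from the signed sum $2S_1=2\sum_{k\geq1}(-1)^kq^{k^2}$ together with the diagonal of $4S_2^2$; your pairing argument is the same symmetry trick the paper uses, only applied to $\sum_{a,b\geq1}q^{2(a^2+b^2)}$ rather than to $\sum_{k,\ell\geq1}q^{k^2+\ell^2}$. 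Both arguments ultimately rest on the same fact ($\varphi^4\equiv 1\pmod 8$, which is also what justifies truncating the infinite product to three factors), and both are equally elementary; the paper's version is marginally shorter because the $6$ appears in one step, while yours has the merit of showing that the product form the paper mentioned in passing does indeed yield the stated characterization of $(-1)^n\overline{C}_{3,1}(n)$ modulo $8$.
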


\begin{proof}
Since $\displaystyle{\varphi(q) = 1+2\sum_{n\geq 1}q^{n^2}},$ it is clear that $\varphi(q)^4 \equiv 1 \pmod{8}.$  It follows that 
\begin{eqnarray*}
&& 
\sum_{n=0}^\infty (-1)^n\overline{C}_{3,1}(n)q^n \\
&=&
\frac{\varphi(q^3)}{\varphi(q)} \\
&\equiv & 
\varphi(q^3)\varphi(q)^3 \pmod{8} \\
&=& 
\left(  1+2\sum_{k\geq 1}q^{k^2} \right)^3 \left(  1+2\sum_{k\geq 1}q^{3k^2} \right) \\
&\equiv & 
\left(  1+6\sum_{k\geq 1}q^{k^2}  + 4\sum_{k,\ell \geq 1}q^{k^2+\ell^2} \right) \left(  1+2\sum_{k\geq 1}q^{3k^2} \right) \pmod{8} \\
&\equiv & 
 1+6\sum_{k\geq 1}q^{k^2} + 2\sum_{k\geq 1}q^{3k^2} + 4\sum_{k,\ell \geq 1}q^{k^2+\ell^2} + 4\sum_{k,\ell \geq 1}q^{k^2+3\ell^2}  \pmod{8} \\
 &\equiv & 
 1+6\sum_{k\geq 1}q^{k^2} + 4\sum_{k \geq 1}q^{2k^2} + 2\sum_{k\geq 1}q^{3k^2}  + 4\sum_{k,\ell \geq 1}q^{k^2+3\ell^2}  \pmod{8}
\end{eqnarray*}
since solutions of $n=k^2+\ell^2$ with $k\not=\ell$ come in pairs.
\end{proof}

We close this section by briefly noting a few corollaries of Theorem \ref{C31mod8}.  

\begin{corollary}
For all $k, m\geq 0,$ 
\begin{eqnarray*}
\overline{C}_{3,1}(4^k(16m+6)) &\equiv & 0 \pmod{8}, \\
\overline{C}_{3,1}(4^k(16m+10)) &\equiv & 0 \pmod{8}, \text{\ \ and}\\
\overline{C}_{3,1}(4^k(16m+14)) &\equiv & 0 \pmod{8}.
\end{eqnarray*}
\end{corollary}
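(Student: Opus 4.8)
The plan is to read the corollary directly off the modulo $8$ characterization in Theorem \ref{C31mod8}. Since $(-1)^n$ is a unit modulo $8$, it suffices to show that for $n = 4^k(16m + r)$ with $r \in \{6, 10, 14\}$ the coefficient of $q^n$ on the right-hand side of Theorem \ref{C31mod8} vanishes modulo $8$. That right-hand side is supported only on integers represented by one of the four forms $x^2$, $2x^2$, $3x^2$, and $x^2 + 3y^2$, so the entire task reduces to proving that none of these forms can represent such an $n$.

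First I would record the $2$-adic shape of the target numbers. Since $6, 10, 14 \equiv 2 \pmod 4$, writing $s = 16m + r$ we have $v_2(s) = 1$ and hence $v_2(n) = 2k + 1$ is odd. Two of the monomial forms die immediately from this, as $v_2(x^2)$ and $v_2(3x^2)$ are even, so neither $x^2$ nor $3x^2$ can equal $n$. For $2x^2$, the odd valuation forces $v_2(x) = k$, so $n = 2x^2$ would give $s = 2\cdot(\text{odd})^2 \equiv 2 \pmod{16}$; but $r \in \{6, 10, 14\}$ is never $\equiv 2 \pmod{16}$, a contradiction.

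The main obstacle is the genuinely binary form $x^2 + 3y^2$, and here I would invoke a classical $2$-adic descent. If $4 \mid x^2 + 3y^2$, reduction modulo $4$ forces $x$ and $y$ to have the same parity: either both are even, whence $x^2 + 3y^2 = 4\big((x/2)^2 + 3(y/2)^2\big)$, or both are odd, whence writing $x = 2a+1$, $y = 2b+1$ gives $x^2 + 3y^2 = 4\big(a(a+1) + 3b(b+1) + 1\big)$ with the inner factor odd. Applied to $n = x^2 + 3y^2$ with $v_2(n) = 2k+1 \geq 3$, the odd valuation rules out the both-odd branch (it would force $v_2(n) = 2$), so only the both-even descent can occur; iterating $k$ times produces a representation $s = (x')^2 + 3(y')^2$ with $s \equiv 6, 10, 14 \pmod{16}$.

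It then remains to rule out these residues at the bottom of the descent, which is a finite check modulo $16$: the squares lie in $\{0,1,4,9\}$ and the values $3y^2$ lie in $\{0,3,11,12\}$, so $x^2 + 3y^2$ lies in $\{0,1,3,4,5,7,9,11,12,13,15\} \pmod{16}$, which excludes $6, 10, 14$. The same table simultaneously disposes of the $k=0$ cases of all four forms, since $6, 10, 14$ avoid $\{0,1,4,9\}$, $\{0,2,8\}$ and $\{0,3,11,12\}$ as well. Hence no form represents $n$, the relevant coefficient is $0$ modulo $8$, and the three congruences follow. I expect the descent step for $x^2 + 3y^2$ — in particular verifying that the odd valuation of $n$ forecloses the both-odd branch — to be the only part requiring genuine care; everything else is bookkeeping on residues modulo $16$.
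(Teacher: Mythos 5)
Your proof is correct and takes exactly the route the paper intends: the corollary appears there without proof as an immediate consequence of Theorem \ref{C31mod8}, and your argument supplies the omitted verification that $4^k(16m+r)$ with $r\in\{6,10,14\}$ is neither a square, twice a square, three times a square, nor of the form $x^2+3y^2$ (the same non-representability strategy the paper does spell out for its prime-power corollary modulo $8$). The $2$-adic descent for $x^2+3y^2$ and the residue tables modulo $16$ are the standard way to fill in that step, and all of your computations check out.
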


\begin{corollary}
For all $k, m\geq 0,$ 
\begin{eqnarray*}
\overline{C}_{3,1}(2^k(6m+5)) &\equiv & 0 \pmod{8}.
\end{eqnarray*}
\end{corollary}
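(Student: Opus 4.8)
The plan is to read off the coefficient of $q^N$ in the congruence of Theorem~\ref{C31mod8}, applied to $N=2^k(6m+5)$. Since $(-1)^N$ is a unit modulo $8$, proving $\overline{C}_{3,1}(N)\equiv 0\pmod 8$ is equivalent to proving that the coefficient of $q^N$ on the right-hand side of that congruence vanishes modulo $8$. Writing $u=6m+5$ for the odd part of $N$ and noting that $u$ is odd with $u\equiv 2\pmod 3$, this coefficient equals
\[
6\,e_1+4\,e_2+2\,e_3+4\,r(N)\pmod 8,
\]
where $e_1,e_2,e_3\in\{0,1\}$ record whether $N$ is a perfect square, twice a perfect square, or three times a perfect square, respectively, and $r(N)$ is the number of pairs $(a,b)$ with $a,b\ge 1$ and $N=a^2+3b^2$. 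Thus it suffices to show that each of $e_1,e_2,e_3$ is $0$ and that $r(N)$ is even.

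The three ``square'' terms are disposed of by inspecting the odd part $u$. If $N$ were a perfect square or twice a perfect square, then the odd part of $N$ (equivalently, of $N/2$) would itself be a perfect square; but $u\equiv 2\pmod 3$ is a quadratic non-residue modulo $3$, so this is impossible, giving $e_1=e_2=0$. Likewise $3\nmid N$, since $3\nmid 2^k$ and $u\equiv 2\pmod 3$, so $N$ cannot be three times a square and $e_3=0$. Hence the first three contributions all vanish.

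The crux is showing that $r(N)$ is even; in fact I expect to prove the stronger statement $r(N)=0$, i.e.\ that $N$ is not represented by $a^2+3b^2$ at all. Here I would split on the parity of $k$. When $k$ is even, $N\equiv u\equiv 2\pmod 3$, whereas $a^2+3b^2\equiv a^2\equiv 0,1\pmod 3$, so there is no representation. When $k$ is odd, $\nu_2(N)=k$ is odd, and I would invoke the elementary fact that every value of $a^2+3b^2$ has even $2$-adic valuation. This fact follows by descent: if $a,b$ are both odd then $a^2+3b^2\equiv 4\pmod 8$, so $\nu_2=2$; if $a,b$ have opposite parity the value is odd, so $\nu_2=0$; and if both are even one factors out $4$ and repeats on $(a/2)^2+3(b/2)^2$. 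Consequently a value with odd $\nu_2$, such as $N$ with $k$ odd, admits no representation, so $r(N)=0$.

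The only genuine obstacle is the $a^2+3b^2$ term, and it is resolved by the two observations just described (a congruence modulo $3$ when $k$ is even, and the even-$2$-adic-valuation lemma when $k$ is odd); everything else reduces to a one-line check on the odd part $u=6m+5$.
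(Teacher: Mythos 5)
Your proof is correct and is essentially the argument the paper has in mind: the paper states this corollary without proof, but the proof it supplies for the companion corollary about primes $p\equiv 5,11\pmod{12}$ shows the intended template, namely to read off from Theorem \ref{C31mod8} that it suffices to check that $n=2^k(6m+5)$ is neither a square, twice a square, three times a square, nor of the form $x^2+3y^2$ (the last making $r(n)=0$, hence even). Your only addition is the elementary descent proof that every value of $a^2+3b^2$ has even $2$-adic valuation, which for the prime $2$ plays exactly the role of the paper's observation that $\left(\frac{-3}{p}\right)=-1$ together with odd $\nu_p(n)$ obstructs representation.
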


\begin{corollary}
Let $p$ be prime, $p\equiv 5, 11\pmod{12}.$  For all $k, m\geq 0$ with $p\nmid m,$  $$\overline{C}_{3,1}(p^{2k+1}m) \equiv 0 \pmod{8}.$$  
\end{corollary}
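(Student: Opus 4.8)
The plan is to extract the coefficient of $q^N$ from the right-hand side of Theorem \ref{C31mod8} with $N = p^{2k+1}m$ and show that each of the four sums contributes nothing modulo $8$. Since $p \equiv 5,11 \pmod{12}$ we have $p \ne 2$, $p \ne 3$, and $p \equiv 2 \pmod 3$, while $\nu_p(N) = 2k+1$ is odd because $p \nmid m$. Three of the four terms are disposed of immediately by a $p$-adic valuation count: if $N$ were of the form $j^2$, $2j^2$, or $3j^2$ (with $j \ge 1$), then since $p$ is coprime to both $2$ and $3$ we would have $\nu_p(N) = 2\nu_p(j)$, which is even, contradicting $\nu_p(N) = 2k+1$. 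Hence the contributions of $6\sum_{k\ge1} q^{k^2}$, $4\sum_{k\ge1} q^{2k^2}$, and $2\sum_{k\ge1} q^{3k^2}$ to the coefficient of $q^N$ all vanish.

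The one remaining term, $4\sum_{k,\ell\ge1} q^{k^2+3\ell^2}$, is where the genuine content lies, and I expect this to be the main obstacle. The key input is the elementary lemma that if $p$ is a prime with $p \equiv 2 \pmod 3$ and $p \mid x^2+3y^2$, then $p \mid x$ and $p \mid y$. To prove it I would use that $p \equiv 2 \pmod 3$ is equivalent to $\left(\frac{-3}{p}\right) = -1$: if $p \nmid y$, then $x^2 \equiv -3y^2 \pmod p$ exhibits $-3$ as a quadratic residue modulo $p$, a contradiction, so $p \mid y$, whence $p \mid x^2$ and $p \mid x$.

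Applying this lemma and descending completes the argument: any representation $N = x^2 + 3y^2$ would give $p \mid x$ and $p \mid y$, hence $p^2 \mid N$ with $N/p^2 = (x/p)^2 + 3(y/p)^2$, so by induction $\nu_p(N)$ would have to be even. Since $\nu_p(N) = 2k+1$ is odd, $N$ admits no representation of the form $x^2 + 3y^2$ whatsoever, and in particular the coefficient of $q^N$ in $\sum_{k,\ell\ge1} q^{k^2+3\ell^2}$ is $0$. Combining the four vanishings, the coefficient of $q^N$ on the right of Theorem \ref{C31mod8} is $\equiv 0 \pmod 8$, i.e. $(-1)^N\overline{C}_{3,1}(N) \equiv 0 \pmod 8$; since $(-1)^N$ is a unit modulo $8$, the desired congruence $\overline{C}_{3,1}(p^{2k+1}m) \equiv 0 \pmod 8$ follows. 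The only delicate step is the quadratic-residue lemma together with its descent for the form $x^2+3y^2$; everything else is routine bookkeeping on the valuation $\nu_p(N)$.
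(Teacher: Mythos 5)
Your proof is correct and follows essentially the same route as the paper: both extract the coefficient of $q^N$ from Theorem \ref{C31mod8} and kill all four terms via the parity of $\nu_p(N)$, using $\left(\frac{-3}{p}\right)=-1$ for the $x^2+3y^2$ term. The only difference is that you spell out the quadratic-residue lemma and descent argument that the paper dismisses as standard, which is a welcome filling-in of detail rather than a new idea.
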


\begin{proof}
Note that $n=p^{2k+1}m$ with $p\nmid m$ is neither a square, twice a square, three times a square, nor of the form $x^2+3y^2$
(since $\left(\frac{-3}{p}\right)=-1$ and $\nu_p(n)$ is odd).
\end{proof}

\section{Results for $\overline{C}_{4,1}(n)$}
\label{C41_section}
We now wish to consider other examples of the functions $\overline{C}_{k,i}(n)$ where arithmetic properties can be proven using elementary means.  
In this section, we concentrate on the function $\overline{C}_{4,1}(n).$  

\begin{theorem}
\label{genfn_C41}
\begin{equation*}
\sum_{n= 0}^\infty \overline{C}_{4,1}(n)q^n  = \frac{(q^2;q^2)_\infty^2}{(q;q)_\infty^2}.
\end{equation*}
\end{theorem}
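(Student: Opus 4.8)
The plan is to start from the general generating function (\ref{main_genfn}) specialized to $k=4$, $i=1$, which gives
\begin{equation*}
\sum_{n=0}^\infty \overline{C}_{4,1}(n)q^n = \frac{(q^4;q^4)_\infty(-q;q^4)_\infty(-q^3;q^4)_\infty}{(q;q)_\infty},
\end{equation*}
and to reduce the right-hand side to the compact infinite-product form $(q^2;q^2)_\infty^2/(q;q)_\infty^2$ by purely formal $q$--series manipulations, exactly in the style of the chain of equalities already carried out for $\overline{C}_{3,1}(n)$ at the start of Section \ref{C31_section}.

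First I would combine the two overlined factors $(-q;q^4)_\infty(-q^3;q^4)_\infty$. Since the exponents $1$ and $3$ together with their $\bmod\ 4$ progression cover precisely the odd integers, these two products multiply to $(-q;q^2)_\infty$, the generating product over all odd powers of $q$. Thus the numerator becomes $(q^4;q^4)_\infty(-q;q^2)_\infty$, and the whole expression is $(q^4;q^4)_\infty(-q;q^2)_\infty/(q;q)_\infty$. The key step is then to rewrite $(-q;q^2)_\infty$ using the standard identities relating products with negative arguments to products of the form $(q^2;q^2)_\infty$; specifically I would use $(-q;q^2)_\infty = (q^2;q^2)_\infty/\bigl((q;q^2)_\infty(q^4;q^4)_\infty\bigr)$ together with $(q;q^2)_\infty = (q;q)_\infty/(q^2;q^2)_\infty$, so that everything collapses into even-modulus products. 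Substituting these and cancelling the $(q^4;q^4)_\infty$ factor against the one introduced in the denominator should leave exactly $(q^2;q^2)_\infty^2/(q;q)_\infty^2$.

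I expect the main obstacle to be purely bookkeeping: choosing the right elementary product identities so that the $(q^4;q^4)_\infty$ and $(q^2;q^2)_\infty$ factors cancel cleanly rather than proliferate. The cleanest route is probably to convert every factor into the ratio form $(q^a;q^a)_\infty$ at the outset, using the two basic facts $(-q;q)_\infty = (q^2;q^2)_\infty/(q;q)_\infty$ (the generating function for partitions into distinct parts) and $(-q;q^2)_\infty = (-q;q)_\infty/(-q^2;q^2)_\infty$, and then doing a single simplification. Once the numerator and denominator are both expressed as monomials in the symbols $(q;q)_\infty$, $(q^2;q^2)_\infty$, $(q^4;q^4)_\infty$, the desired identity reduces to matching exponents, which is routine. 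No step poses a genuine conceptual difficulty, so the proof should consist of one displayed chain of equalities analogous to the one already used for $\overline{C}_{3,1}(n)$.
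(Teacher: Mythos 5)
Your proposal is correct and follows essentially the same route as the paper: specialize (\ref{main_genfn}) to $k=4$, $i=1$, combine $(-q;q^4)_\infty(-q^3;q^4)_\infty=(-q;q^2)_\infty$, and then convert everything to products of the form $(q^a;q^a)_\infty$ and cancel. The paper's only cosmetic difference is that it passes through $(-q;q^2)_\infty=(q^2;q^4)_\infty/(q;q^2)_\infty$ rather than the equivalent identities you cite, so the two arguments are the same up to bookkeeping.
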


\begin{proof}
Beginning with (\ref{main_genfn}), we see that 
\begin{eqnarray*}
\sum_{n= 0}^\infty \overline{C}_{4,1}(n)q^n 
&=& 
\frac{(q^4;q^4)_\infty(-q;q^4)_\infty(-q^3;q^4)_\infty}{(q;q)_\infty} \\
&=& 
\frac{(q^4;q^4)_\infty(-q;q^2)_\infty}{(q;q)_\infty} \\
&=& 
\frac{(q^4;q^4)_\infty(q^2;q^4)_\infty}{(q;q)_\infty(q;q^2)_\infty} \\
&=& 
\frac{(q^2;q^2)_\infty^2}{(q;q)_\infty^2}.
\end{eqnarray*}
\end{proof}
Theorem \ref{genfn_C41} provides the following characterization of $\overline{C}_{4,1}(n)$ modulo 2.  

\begin{theorem}
\label{C41mod2}
For all $n\geq 1,$ 
\begin{equation*}
\overline{C}_{4,1}(n) \equiv 
\begin{cases}
1\pmod{2} & \text{if } n = k(3k-1),\\
0\pmod{2} & \text{otherwise. }
\end{cases}
\end{equation*}
\end{theorem}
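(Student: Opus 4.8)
The plan is to start from the generating function established in Theorem \ref{genfn_C41}, namely
\[
\sum_{n=0}^\infty \overline{C}_{4,1}(n)q^n = \frac{(q^2;q^2)_\infty^2}{(q;q)_\infty^2},
\]
and reduce it modulo 2. The key observation is that squaring a power series kills all the cross terms modulo 2: for any series $f(q)=\sum a_j q^j$ with integer coefficients one has $f(q)^2 \equiv \sum a_j^2 q^{2j} \equiv f(q^2) \pmod 2$, since $2a_ja_\ell\equiv 0$ and $a_j^2\equiv a_j$. Applying this to $(q;q)_\infty$ gives $(q;q)_\infty^2 \equiv (q^2;q^2)_\infty \pmod 2$. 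Hence the generating function collapses dramatically:
\[
\frac{(q^2;q^2)_\infty^2}{(q;q)_\infty^2} \equiv \frac{(q^2;q^2)_\infty^2}{(q^2;q^2)_\infty} = (q^2;q^2)_\infty \pmod 2.
\]

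The next step is to identify the coefficients of $(q^2;q^2)_\infty$. By Euler's pentagonal number theorem,
\[
(q;q)_\infty = \sum_{k=-\infty}^\infty (-1)^k q^{k(3k-1)/2},
\]
so replacing $q$ by $q^2$ yields
\[
(q^2;q^2)_\infty = \sum_{k=-\infty}^\infty (-1)^k q^{k(3k-1)}.
\]
The exponents appearing are exactly the doubled generalized pentagonal numbers $k(3k-1)$ for $k\in\mathbb{Z}$. Working modulo 2 the signs $(-1)^k$ become irrelevant, so the coefficient of $q^n$ is congruent to $1$ precisely when $n=k(3k-1)$ for some integer $k$, and is congruent to $0$ otherwise. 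This matches the claimed characterization, giving $\overline{C}_{4,1}(n)\equiv 1\pmod 2$ iff $n=k(3k-1)$.

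There is essentially no hard obstacle here; the argument is a clean two-line generating-function reduction followed by an invocation of the pentagonal number theorem. The only point that warrants a moment's care is confirming that the exponents $k(3k-1)$ as $k$ ranges over all of $\mathbb{Z}$ (positive, negative, and zero) give precisely the set intended by the theorem's statement, and that distinct generalized pentagonal numbers do not collide to produce even contributions — but they do not, since Euler's theorem expresses $(q;q)_\infty$ as a sum over distinct exponents, each with coefficient $\pm 1$. Thus after the mod-2 collapse each such exponent contributes exactly one odd coefficient, and the result follows.
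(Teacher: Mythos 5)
Your proof is correct and follows exactly the paper's own argument: reduce the generating function of Theorem \ref{genfn_C41} modulo $2$ using $(q;q)_\infty^2 \equiv (q^2;q^2)_\infty \pmod{2}$, then read off the surviving exponents from Euler's Pentagonal Number Theorem applied to $(q^2;q^2)_\infty$. Your added remark that the generalized pentagonal numbers $k(3k-1)$ are distinct across $k\in\mathbb{Z}$ is a worthwhile explicit check that the paper leaves implicit, but the route is the same.
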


\begin{proof}
Thanks to Theorem \ref{genfn_C41}, we have 
\begin{eqnarray*}
\sum_{n= 0}^\infty \overline{C}_{4,1}(n) q^n
&=& 
\frac{(q^2;q^2)_\infty^2}{(q;q)_\infty^2}\\
&\equiv &
\frac{(q^2;q^2)_\infty^2}{(q^2;q^2)_\infty} \pmod{2}\\
&=& 
(q^2;q^2)_\infty \\
&\equiv &
\sum_{k=-\infty}^\infty q^{k(3k-1)} \pmod{2}
\end{eqnarray*}
thanks to Euler's Pentagonal Number Theorem \cite[Corollary 1.7]{A}.  
\end{proof}
Two corollaries follow immediately from the above.  

\begin{corollary}
For all $n\geq 0,$ 
$$\overline{C}_{4,1}(2n+1) \equiv 0\pmod{2}.$$
\end{corollary}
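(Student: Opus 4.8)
The plan is to read off this congruence directly from Theorem \ref{C41mod2}, which gives a complete parity classification of $\overline{C}_{4,1}(n)$: the value is odd precisely when $n$ is a generalized pentagonal number $k(3k-1)$ for some integer $k$, and even otherwise. Since $2n+1$ is odd, the entire task reduces to checking that no odd integer arises as a value $k(3k-1)$; once that is established, every argument of the form $2n+1$ falls into the ``otherwise'' branch of the theorem and is therefore $\equiv 0 \pmod 2$.

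The key step is a one-line parity observation about the integers $k(3k-1)$. I would argue that exactly one of the two factors $k$ and $3k-1$ is even for every integer $k$: if $k$ is even then $3k-1$ is odd, while if $k$ is odd then $3k-1 = 3k - 1$ is even. In either case the product $k(3k-1)$ is even, so the set of generalized pentagonal numbers consists entirely of even integers. Consequently an odd integer $2n+1$ can never equal $k(3k-1)$.

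Combining these two facts finishes the proof: for any $n \geq 0$, the integer $2n+1$ is not a generalized pentagonal number, so Theorem \ref{C41mod2} places it in the case yielding $\overline{C}_{4,1}(2n+1) \equiv 0 \pmod 2$. There is essentially no obstacle here; the only content beyond invoking the previous theorem is the elementary parity computation for $k(3k-1)$, and the result then follows immediately.
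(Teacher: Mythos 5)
Your proof is correct and follows exactly the paper's own argument: the paper's proof of this corollary is the one-line observation that $k(3k-1)$ is even for every integer $k$, so odd arguments fall into the ``otherwise'' case of Theorem \ref{C41mod2}. Your parity check (one of $k$, $3k-1$ is always even) is precisely this reasoning, just spelled out in slightly more detail.
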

\begin{proof}
Note that $k(3k-1)$ is even for all integers $k.$  
\end{proof}
\begin{corollary}
Let $p$ be prime and let $1\leq r\leq p-1$ with $12r+1$ a quadratic nonresidue modulo $p.$  Then, for all $m\geq 0,$ 
$$
\overline{C}_{4,1}(pm+r) \equiv 0\pmod{2}.
$$
\end{corollary}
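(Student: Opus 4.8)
The plan is to use the characterization in Theorem \ref{C41mod2}, which tells us that $\overline{C}_{4,1}(n)$ is odd exactly when $n$ is a generalized pentagonal number of the form $k(3k-1)$ for some integer $k$, and even otherwise. Thus it suffices to show that under the hypothesis on $r$, no integer of the form $pm+r$ can equal $k(3k-1)$; once that is established, Theorem \ref{C41mod2} delivers the congruence immediately.

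The key algebraic step is to complete the square. Starting from $n = k(3k-1) = 3k^2 - k$ and multiplying by $12$ gives $12n = 36k^2 - 12k = (6k-1)^2 - 1$, so that
$$
12n + 1 = (6k-1)^2.
$$
Hence $n$ is a generalized pentagonal number if and only if $12n+1$ is a perfect square. This reformulation is precisely what lets the quadratic-residue hypothesis on $12r+1$ enter the picture.

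Now I would argue by contradiction, supposing that $pm+r = k(3k-1)$ for some $m\ge 0$ and some integer $k$. Substituting $n = pm+r$ into the identity above yields $(6k-1)^2 = 12n+1 = 12pm + 12r + 1 \equiv 12r + 1 \pmod{p}$, which exhibits $12r+1$ as a square modulo $p$. Since $12r+1$ is assumed to be a quadratic nonresidue modulo $p$ — in particular a nonzero nonsquare — this is a contradiction. Therefore $pm+r$ is never of the form $k(3k-1)$, and Theorem \ref{C41mod2} gives $\overline{C}_{4,1}(pm+r) \equiv 0 \pmod{2}$ for all $m\ge 0$.

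I do not expect any serious obstacle here: once the pentagonal condition is rewritten as ``$12n+1$ is a perfect square,'' the congruence follows mechanically from reduction modulo $p$. The one point requiring a little care is the standard convention that a quadratic nonresidue is a \emph{nonzero} nonsquare, which rules out the degenerate possibility $12r+1 \equiv 0 \pmod{p}$ (where $6k-1$ would be divisible by $p$); the hypothesis is phrased exactly so that this case cannot occur.
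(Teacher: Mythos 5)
Your proof is correct and follows essentially the same route as the paper: the paper likewise rewrites the condition $n=k(3k-1)$ as $12n+1=(6k-1)^2$ (by considering $\sum \overline{C}_{4,1}(n)q^{12n+1}\equiv\sum q^{(6k-1)^2}\pmod 2$) and then observes that $12n+1\equiv 12r+1\pmod p$ is a nonresidue, hence not a square. The only difference is presentational — you phrase it as a contradiction via the parity characterization, while the paper works directly with the dissected generating function — and your remark about $12r+1$ being nonzero modulo $p$ is a nice point the paper leaves implicit.
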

\begin{proof}
We have
$$
\sum_{n=0}^\infty\overline{C}_{4,1}(n)q^{12n+1}\equiv\sum_{k= -\infty}^\infty q^{(6k-1)^2}\pmod2.
$$
Here $n=pm+r$, so $12n+1=12pm+12r+1\equiv12r+1\pmod{p}$ is not a square modulo $p.$  Thus, $12n+1$ is
not a square, and $\overline{C}_{4,1}(n)\equiv0\pmod2$.
\end{proof}
From Theorem \ref{genfn_C41}, we can also obtain results modulo 4 satisfied by $\overline{C}_{4,1}(n).$  

\begin{theorem}
\label{C41mod4}
If $n$ cannot be represented as the sum of two pentagonal numbers, or if $n$ cannot be represented as the sum of a square and four times a pentagonal number,  then $\overline{C}_{4,1}(n) \equiv 0 \pmod{4}.$   
\end{theorem}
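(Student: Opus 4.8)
The plan is to reduce the eta-quotient of Theorem \ref{genfn_C41} modulo $4$ to a product of two classical theta-type series, and to do so in \emph{two} different ways. Once the generating function is congruent modulo $4$ to a product of theta/eta series with explicit quadratic or pentagonal exponents, the coefficient $\overline{C}_{4,1}(n)$ can be nonzero modulo $4$ only if $n$ lies in the sumset of the two exponent sets. Two such reductions therefore give two independent necessary conditions for $\overline{C}_{4,1}(n)\not\equiv 0\pmod 4$, and the theorem is exactly their contrapositive: if \emph{either} representation fails, the coefficient vanishes modulo $4$. The engine behind both reductions is the elementary principle that $A\equiv B\pmod 2$ forces $A^2\equiv B^2\pmod 4$ (write $A=B+2C$ and expand), together with the fact that any power series with constant term $1$ is a unit in $\mathbb{Z}[[q]]$, so congruences modulo $4$ between units may be inverted and divided.

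For the first reduction I would start from $\sum_{n\ge0}\overline{C}_{4,1}(n)q^n=\frac{(q^2;q^2)_\infty^2}{(q;q)_\infty^2}$ and use $(q;q)_\infty^2\equiv(q^2;q^2)_\infty\pmod 2$ (the squaring principle applied to Euler's function, already invoked in the proof of Theorem \ref{C41mod2}). Squaring gives $(q;q)_\infty^4\equiv(q^2;q^2)_\infty^2\pmod 4$, and dividing this congruence between units by $(q;q)_\infty^2$ yields
\[
\sum_{n\ge0}\overline{C}_{4,1}(n)q^n\equiv(q;q)_\infty^2\pmod 4 .
\]
By Euler's Pentagonal Number Theorem \cite[Corollary 1.7]{A} we have $(q;q)_\infty^2=\left(\sum_{k}(-1)^kq^{k(3k-1)/2}\right)^2$, whose support is precisely the set of integers expressible as a sum of two pentagonal numbers. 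This is the first representability condition.

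For the second reduction I would use the product form $\varphi(-q)=\frac{(q;q)_\infty^2}{(q^2;q^2)_\infty}$ recorded in Section \ref{C31_section} to rewrite the generating function as $\frac{(q^2;q^2)_\infty}{\varphi(-q)}$, and then apply Corollary \ref{corA} to replace $1/\varphi(-q)$ by $\varphi(q)/\varphi(-q^2)^2$, obtaining
\[
\sum_{n\ge0}\overline{C}_{4,1}(n)q^n=\frac{\varphi(q)\,(q^2;q^2)_\infty}{\varphi(-q^2)^2}.
\]
Since $\varphi(-q^2)\equiv 1\pmod 2$ by (\ref{phi_defn}), the squaring principle gives $\varphi(-q^2)^2\equiv 1\pmod 4$, so the denominator disappears modulo $4$ and $\sum_{n\ge0}\overline{C}_{4,1}(n)q^n\equiv\varphi(q)\,(q^2;q^2)_\infty\pmod 4$. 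Expanding $\varphi(q)(q^2;q^2)_\infty=\left(\sum_{a}q^{a^2}\right)\left(\sum_{k}(-1)^kq^{k(3k-1)}\right)$ via (\ref{phi_defn}) and the Pentagonal Number Theorem, a nonzero coefficient at $q^n$ forces $n=a^2+k(3k-1)$, i.e.\ $n$ is a square plus a (suitably scaled) pentagonal number. This is the second representability condition, and combining the two reductions shows that $\overline{C}_{4,1}(n)\not\equiv 0\pmod 4$ requires both representations to exist.

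The main obstacle I anticipate is not the algebra but the bookkeeping in the second step. One must carefully justify clearing the denominator $\varphi(-q^2)^2$ modulo $4$ (the unit-inversion argument above), and, more delicately, match the exponent set $\{a^2+k(3k-1)\}$ to the precise phrase ``a square and four times a pentagonal number'' in the statement. Concretely, this means fixing once and for all which normalization of ``pentagonal number'' ($\tfrac{k(3k-1)}{2}$ versus $k(3k-1)$) is in force in each of the two conditions, so that the supports read off from the two theta products align with the two stated representations. The first reduction and the vanishing conclusion are routine given the squaring principle; it is the consistent identification of the quadratic and pentagonal forms underlying each product where the care is required.
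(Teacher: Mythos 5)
Your proposal is mathematically correct, and its two halves line up with the paper's two-pronged strategy, but the second half takes a genuinely different route --- and, importantly, it reaches the \emph{right} answer where the paper's own proof stumbles. Your first reduction is essentially identical to the paper's: the paper deduces $\sum_{n\ge0}\overline{C}_{4,1}(n)q^n\equiv(q;q)_\infty^2\pmod4$ from $(1-q^2)^2\equiv(1-q)^4\pmod4$, which is the same squaring principle you invoke. For the second reduction the paper substitutes $q\mapsto-q$, writes $\sum(-1)^n\overline{C}_{4,1}(n)q^n=1/(-q;q^2)_\infty^2$, and clears the denominator using $\varphi(q)^2\equiv1\pmod4$; you instead stay with $\sum\overline{C}_{4,1}(n)q^n$, factor it as $\varphi(q)(q^2;q^2)_\infty/\varphi(-q^2)^2$ via Corollary \ref{corA}, and clear the denominator using $\varphi(-q^2)^2\equiv1\pmod4$. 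Both unit-inversion arguments are valid, and the two targets are interchangeable modulo $4$ anyway, since by Theorem \ref{C41mod2} the odd-indexed coefficients are even.

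The ``bookkeeping'' issue you flagged at the end is not bookkeeping at all: it is a genuine factor-of-$2$ discrepancy, and your side of it is the correct one. Your reduction gives support contained in $\{a^2+k(3k-1)\}$, i.e.\ a square plus \emph{twice} a generalized pentagonal number. The paper instead asserts $\frac{(q^2;q^2)_\infty^6}{(q;q)_\infty^2(q^4;q^4)_\infty^2}=(q^4;q^4)_\infty\varphi(q)$, with exponents $2k(3k-1)+\ell^2$; but since $\varphi(q)=\frac{(q^2;q^2)_\infty^5}{(q;q)_\infty^2(q^4;q^4)_\infty^2}$ (used in the lemma of Section \ref{C31_section}), that quotient is actually $(q^2;q^2)_\infty\varphi(q)$ --- exactly your product. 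Consequently the phrase ``four times a pentagonal number'' in the statement should read ``twice a pentagonal number'': as literally stated the theorem fails, e.g.\ $n=2$ is not a square plus four times a pentagonal number, yet $\overline{C}_{4,1}(2)=3\not\equiv0\pmod4$ (and it \emph{is} $0+2\cdot1$, a square plus twice a pentagonal number). Note that the subsequent Corollary \ref{C41mod4primes3} silently relies on the corrected support, since its display comes from $12\bigl(k(3k-1)+\ell^2\bigr)+1=(6k-1)^2+3(2\ell)^2$. So do not contort your exponent set to match the stated ``four times''; your proof is sound, and it is the statement (inherited from the slip in the paper's final display) that needs the factor-of-two repair.
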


\begin{proof}
From the fact that $(1-q^2)^2\equiv(1-q)^4\pmod4$, we find
\begin{eqnarray*}
\sum_{n=0}^\infty\overline{C}_{4,1}(n)q^n 
&=& 
\frac{(q^2;q^2)_\infty^2}{(q;q)_\infty^2}\\
&\equiv &
\frac{(q;q)_\infty^4}{(q;q)_\infty^2} \pmod{4}\\
&=&
(q;q)_\infty^2\\
&=&\sum_{k,l=-\infty}^\infty (-1)^{k+l}q^{\frac{k(3k-1)}{2}+\frac{l(3l-1)}{2}},
\end{eqnarray*}
and
\begin{eqnarray*}
\sum_{n=0}^\infty(-1)^n\overline{C}_{4,1}(n)q^n
&\equiv &
(-q;q^2)_\infty^2(q^2;q^2)_\infty^2 \pmod{4}\\
&=&
\frac{(q^2;q^4)_\infty^2(q^2;q^2)_\infty^2}{(q;q^2)_\infty^2}\\
&=&
\frac{(q^2;q^2)_\infty^6}{(q;q)_\infty^2(q^4;q^4)_\infty^2}\\
&=& 
(q^4;q^4)_\infty\varphi(q)\\
&=&
\sum_{k,l=-\infty}^\infty(-1)^kq^{2k(3k-1)+\ell^2}.
\end{eqnarray*}
The result follows.  
\end{proof}
Using Theorem \ref{C41mod4}, we can explicitly write infinitely many Ramanujan--like congruences modulo 4 satisfied by $\overline{C}_{4,1}(n).$  

\begin{corollary}
\label{C41mod4primes3}
Let $p\ge5$ be a prime and $p\not\equiv1\pmod{12}.$ Then for all $k, m\ge0$ with $p\nmid m$,
$$\overline{C}_{4,1}\left (p^{2k+1}m+\frac{p^{2k+2}-1}{12}\right )\equiv0\pmod4.$$
\end{corollary}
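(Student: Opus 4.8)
The plan is to deduce the congruence from the two sufficient conditions supplied by Theorem~\ref{C41mod4}, after recasting both of them as statements about the single integer $12N+1$. Setting $N = p^{2k+1}m + \frac{p^{2k+2}-1}{12}$, the first thing I would record is the clean factorization
\[
12N+1 = 12p^{2k+1}m + p^{2k+2} = p^{2k+1}\left(12m+p\right).
\]
Since $p\ge 5$ we have $p\nmid 12$, and since $p\nmid m$ we get $p\nmid(12m+p)$; hence $p$ divides $12N+1$ to the exact power $2k+1$, which is odd. This odd exponent is the engine of the whole argument.

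Next I would translate the two representation criteria into the language of $12n+1$. Using $24\cdot\frac{k(3k-1)}{2}+1=(6k-1)^2$, a short computation shows that $n$ is a sum of two (generalized) pentagonal numbers exactly when $12n+1$ is a sum of two squares, and that the second generating-function identity in the proof of Theorem~\ref{C41mod4} makes $\overline{C}_{4,1}(n)$ vanish modulo $4$ exactly when $12n+1$ fails to be represented by the form $x^2+12y^2$; in both reductions the conditions forcing the relevant variable to be coprime to $6$ are automatic modulo $2$ and $3$. The point to stress is that non-representability (rather than mere cancellation) is what is used: if $12N+1$ is not a sum of two squares, the signed count of pentagonal representations of $N$ is an empty sum, and likewise for the second form. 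Thus $\overline{C}_{4,1}(N)\equiv 0\pmod 4$ as soon as $12N+1$ fails one of these two representability tests.

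With these reformulations in hand I would finish by a case analysis on $p\bmod 12$, feeding in the odd exponent $\nu_p(12N+1)=2k+1$. If $p\equiv 7$ or $11\pmod{12}$ then $p\equiv 3\pmod 4$, so $\left(\frac{-1}{p}\right)=-1$; any such prime dividing $a^2+b^2$ must divide both $a$ and $b$, forcing an even valuation, so $12N+1$ is not a sum of two squares and the first criterion applies. If $p\equiv 5$ or $11\pmod{12}$ then $p\equiv 2\pmod 3$, so $\left(\frac{-3}{p}\right)=-1$; writing $x^2+12y^2=x^2+3(2y)^2$, any prime $p$ with $\left(\frac{-3}{p}\right)=-1$ dividing $x^2+3(2y)^2$ must divide both $x$ and $2y$, again forcing an even valuation, so $12N+1$ is not of the form $x^2+12y^2$ and the second criterion applies. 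Since $p\not\equiv 1\pmod{12}$ is equivalent to ``$p\equiv 3\pmod 4$ or $p\equiv 2\pmod 3$,'' every admissible $p$ lands in at least one case, and the congruence follows.

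The step I expect to be the main obstacle is the class $p\equiv 5\pmod{12}$: here $p\equiv 1\pmod 4$, so the sum-of-two-squares criterion is powerless and one is forced onto the second representation. The delicate part is identifying the correct quadratic form governing that second representation and then running the inert-prime valuation argument in $\mathbb{Q}(\sqrt{-3})$; getting the form right is essential, since the bookkeeping must reduce to $x^2+12y^2$ for a factor of $p$ to appear in $12N+1$ at all. It is exactly the overlap of the two cases—and the fact that $p\equiv 1\pmod{12}$ is the unique residue class making both $\left(\frac{-1}{p}\right)$ and $\left(\frac{-3}{p}\right)$ equal to $+1$—that explains why that class must be excluded from the hypothesis.
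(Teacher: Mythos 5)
Your proposal is correct and follows essentially the same route as the paper: the same factorization $12N+1=p^{2k+1}(12m+p)$ giving odd $p$-adic valuation, the same split into the cases $p\equiv 3\pmod 4$ (sum of two squares) and $p\equiv 2\pmod 3$ (the form $x^2+3y^2$), and the same inert-prime argument in each case. The only cosmetic differences are that you phrase both criteria in terms of $12N+1$ (the paper uses $24n+2$ for the two-squares case, which is equivalent since $24n+2=2(12n+1)$) and that you keep the form as $x^2+12y^2$ where the paper relaxes to $x^2+3y^2$; in doing so you also implicitly use the corrected version of the second identity in Theorem \ref{C41mod4} (namely $(q^2;q^2)_\infty\varphi(q)=\sum(-1)^kq^{k(3k-1)+\ell^2}$, which is what the paper's own corollary proof actually relies on via $(6k-1)^2+3(2\ell)^2$).
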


\begin{proof}
First suppose $p$ is prime, $p\equiv 7$ or $11\pmod{12}.$
We have
$$
\sum_{n=0}^\infty\overline{C}_{4,1}(n)q^{24n+2}
\equiv
\sum_{k,l=-\infty}^\infty (-1)^{k+l}q^{(6k-1)^2+(6l-1)^2}\pmod4.
$$
Thus,  if $24n+2$ is not the sum of two squares, $\overline{C}_{4,1}(n)\equiv0\pmod4$.

We have $n=p^{2k+1}m+\displaystyle\frac{p^{2k+2}-1}{12}$, so
$$24n+2=24p^{2k+1}m+2p^{2k+2}=p^{2k+1}\left (24m+2p\right )$$ 
and $\nu_p(24n+2)$ is odd.
By Fermat's two--squares theorem, $24n+2$ is not the sum of two squares, so 
$\overline{C}_{4,1}(n)\equiv0\pmod4$.

Now suppose $p$ is prime, $p\equiv 5\pmod{12}.$
We have 
$$
\sum_{n=0}^\infty(-1)^n\overline{C}_{4,1}(n)q^{12n+1}\equiv
\sum_{k,l=-\infty}^\infty(-1)^kq^{(6k-1)^2+3(2l)^2} \pmod{4}.
$$
If $N$ is of the form $x^2+3y^2$,   
then it follows
by a standard argument that $\nu_p(N)$ is even since $\displaystyle{\left (\frac{-3}{p}\right )=-1}.$

However, here $n=p^{2k+1}m+\displaystyle\frac{p^{2k+2}-1}{12}$ and $\nu_p(12n+1)$ is odd.
So $12n+1$ is not of the form $x^2+3y^2$,   and $\overline{C}_{4,1}(n)\equiv0\pmod4.$
\end{proof}

As we close this section, we note that the generating function for $\overline{C}_{4,1}(n)$ is a modular function on $\Gamma_0(2).$  As such, we can slightly modify the proof of Theorem 1 of \cite{SCC} to obtain the following: 

\begin{theorem}
Let $p\geq 5$ be prime and let $\delta_p$ be the least positive residue of $p$ modulo 12.  Then, for all $m\geq 0$ with $p\nmid m,$ 
$$
\overline{C}_{4,1}\left(pm + \frac{p^2-1}{12}\right) \equiv 0\pmod{2^{\delta_p - 1}}.
$$
\end{theorem}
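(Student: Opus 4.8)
The plan is to reduce the assertion, modulo $2^{\delta_p-1}$, to a divisibility statement for the coefficients of a genuine holomorphic modular form, and then to quote the Hecke-operator argument behind Theorem 1 of \cite{SCC}. By Theorem \ref{genfn_C41} the generating function equals $(q^2;q^2)_\infty^2/(q;q)_\infty^2$; in terms of the Dedekind eta function $\eta(\tau) = q^{1/24}(q;q)_\infty$ (with $q = e^{2\pi i\tau}$) this is $q^{-1/12}\eta(2\tau)^2/\eta(\tau)^2$, the weight-zero modular function on $\Gamma_0(2)$ referred to in the statement. The key observation driving the whole argument is that the progression $pm+\frac{p^2-1}{12}$ is precisely the one singled out by the Hecke operator at $p$ acting on the exponents $12n+1$.

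First I would eliminate the denominator $1/(q;q)_\infty^2$ by a $2$-adic substitution. Squaring the congruence $(q;q)_\infty^2 \equiv (q^2;q^2)_\infty \pmod 2$ repeatedly gives $(q;q)_\infty^{2^s} \equiv (q^2;q^2)_\infty^{2^{s-1}} \pmod{2^s}$, and taking $s=\delta_p-1$, together with the fact that $(q;q)_\infty$ and $(q^2;q^2)_\infty$ are units in the power series ring over $\mathbb{Z}/2^{\delta_p-1}\mathbb{Z}$, yields
$$\sum_{n\ge0}\overline{C}_{4,1}(n)q^n = \frac{(q^2;q^2)_\infty^2}{(q;q)_\infty^2} \equiv \frac{(q;q)_\infty^{\,2^{\delta_p-1}-2}}{(q^2;q^2)_\infty^{\,2^{\delta_p-2}-2}} \pmod{2^{\delta_p-1}}.$$
The right-hand side is $q^{-1/12}\eta(\tau)^{a}/\eta(2\tau)^{b}$ with $a = 2^{\delta_p-1}-2$ and $b = 2^{\delta_p-2}-2$, a holomorphic eta quotient of integral weight $(a-b)/2 = 2^{\delta_p-3}$. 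After the substitution $\tau \mapsto 12\tau$ to clear the fractional exponent, I obtain a holomorphic modular form $g(\tau) = \sum_n c(n)q^{12n+1}$ of weight $2^{\delta_p-3}$ on $\Gamma_0(24)$ (with a quadratic character) whose coefficients satisfy $c(n) \equiv \overline{C}_{4,1}(n) \pmod{2^{\delta_p-1}}$ and are supported on exponents $\equiv 1 \pmod{12}$.

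With this reduction the theorem becomes a divisibility statement for the coefficients of $g$. For $n = pm + \frac{p^2-1}{12}$ one computes $12n+1 = p(12m+p)$; since $p\nmid m$ and $\gcd(p,12)=1$, this gives $\nu_p(12n+1)=1$, while $12m+p \equiv \delta_p \pmod{12}$. Thus $\overline{C}_{4,1}(n)$ is, modulo $2^{\delta_p-1}$, a coefficient of $g$ at an index $12n+1$ that is divisible by $p$ to an odd power. Following the proof of Theorem 1 of \cite{SCC}, I would compare $g$ with its quadratic twist by $\left(\frac{\cdot}{p}\right)$, for which all such indices are annihilated: the mod-$2$ reduction of $g$ is a theta-type series that is, up to sign, invariant under this twist, and lifting that invariance through the Hecke relation forces the coefficients with $\nu_p$ odd to vanish. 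The precise $2$-power to which this can be carried out is $2^{\delta_p-1}$, governed by $\delta_p=p\bmod 12$ through the value of $\left(\frac{\cdot}{p}\right)$ on the relevant form, and this yields $\overline{C}_{4,1}(n) \equiv 0 \pmod{2^{\delta_p-1}}$.

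The main obstacle is twofold. First, to apply the method of \cite{SCC} one must verify that the eta quotient $\eta(\tau)^{a}/\eta(2\tau)^{b}$ (in its level-$24$ form) is holomorphic at every cusp and identify its nebentypus; only then does the Hecke/twist computation apply, and these cusp conditions must be rechecked for each $\delta_p$, since the exponents $a,b$ grow with $\delta_p$. Second, one must pin the exact power of $2$: the reduction rests on $(q;q)_\infty^{2^{\delta_p-1}} \equiv (q^2;q^2)_\infty^{2^{\delta_p-2}} \pmod{2^{\delta_p-1}}$, which fixes the modulus at the first stage, and it remains to confirm that the twisting step loses no further factor of $2$, so that the full modulus $2^{\delta_p-1}$ survives rather than a smaller power. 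Granting these two points, the cases $\delta_p \in \{5,7,11\}$ are handled uniformly (the case $\delta_p = 1$ being vacuous, as $2^{\delta_p-1}=1$), and the congruence follows.
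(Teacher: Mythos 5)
Your opening reduction is fine as far as it goes: the congruence $(q;q)_\infty^{2^s}\equiv(q^2;q^2)_\infty^{2^{s-1}}\pmod{2^s}$ is correct, the manipulation with units in $(\mathbb{Z}/2^{\delta_p-1}\mathbb{Z})[[q]]$ is legitimate, the holomorphy of the resulting eta quotient at the cusps is a routine finite check (Ligozat's criterion), and the computation $12n+1=p(12m+p)$, so that $\nu_p(12n+1)$ is odd precisely when $p\nmid m$, is exactly the right way to read the progression. Be aware, though, that the paper itself offers no more than you do at the decisive moment: its entire ``proof'' is the remark that the generating function of Theorem \ref{genfn_C41} is a modular function on $\Gamma_0(2)$ together with a citation of the proof of Theorem 1 of \cite{SCC}. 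So the question is whether your sketch of that cited argument would actually close the proof, and it would not.

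The genuine gap is the final step, which is the entire content of the theorem: why the coefficients at indices $12n+1$ with $\nu_p(12n+1)$ odd vanish modulo $2^{\delta_p-1}$, with the exponent equal to $(p\bmod 12)-1$. Your proposed mechanism --- the mod-$2$ reduction of $g$ is twist-invariant and this invariance is ``lifted through the Hecke relation'' --- cannot deliver this: an identity between mod-$2$ reductions yields conclusions modulo $2$, not modulo $2^{\delta_p-1}$, and a Legendre symbol $\left(\frac{\cdot}{p}\right)=\pm1$ has no way of manufacturing the exponents $4$, $6$, $10$. Notice that in your argument $\delta_p$ enters exactly once, in the initial choice of modulus, and that choice is circular: the reduction works verbatim for every $s$, so if your final step were valid as stated it would prove the congruence modulo $2^s$ for all $s$. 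That is false: for $p=5$, $m=2$ one has $\overline{C}_{4,1}(12)=176=2^4\cdot 11$, so the congruence holds modulo $2^4$ and no higher power. What is missing is the graded $2$-adic structure that actually produces $\delta_p-1$. In arguments of this type (compare the paper's own mod $8$ analysis of $\overline{C}_{3,1}$ in Section \ref{C31_section}, Theorem \ref{C31mod8} and its corollaries, and the case analysis in Corollary \ref{C41mod4primes3}), one expands the generating function as a finite sum $\sum_k 2^k\,(\text{theta series in } k+1 \text{ variables})$; the conditions $\left(\frac{-1}{p}\right)=-1$, $\left(\frac{-3}{p}\right)=-1$ --- which are precisely what $p\bmod 12$ controls --- force the representation numbers in the low-$k$ layers to vanish identically at odd $\nu_p$, while the high-$k$ layers already carry the required power of $2$, and $\delta_p-1$ is exactly the bookkeeping of how many layers die. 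Your collapse of everything into a single holomorphic form of weight $2^{\delta_p-3}$ erases this grading, and it also makes the ``uniform'' treatment of $\delta_p\in\{5,7,11\}$ illusory, since for $\delta_p=11$ you would be decomposing a weight-$256$ form on $\Gamma_0(288)$ into eigenforms and proving $2$-adic bounds on eigenvalue combinations for every prime $p\equiv 11\pmod{12}$ at once --- a far harder task than the theorem itself.
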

Hence, for example, we have the following:  

\begin{corollary}
For all $m\geq 0,$ 
\begin{eqnarray*}
\overline{C}_{4,1}(5m+2) &\equiv & 0\pmod{2^4} \text{\ \ if\ \  } 5\nmid m, \\
\overline{C}_{4,1}(7m+4) &\equiv & 0\pmod{2^6} \text{\ \ if\ \  } 7\nmid m, \\
\overline{C}_{4,1}(11m+10) &\equiv & 0\pmod{2^{10}} \text{\ \ if\ \  } 11\nmid m.
\end{eqnarray*}

\end{corollary}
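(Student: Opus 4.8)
The plan is to translate the statement into the language of modular forms on $\Gamma_0(2)$ and then run the $2$-adic Hecke argument of \cite{SCC}. First I would record that, by Theorem \ref{genfn_C41},
\[
\sum_{n=0}^\infty \overline{C}_{4,1}(n)q^n=\frac{(q^2;q^2)_\infty^2}{(q;q)_\infty^2}=q^{-1/12}\,\frac{\eta(2\tau)^2}{\eta(\tau)^2},
\]
where $\eta(\tau)=q^{1/24}(q;q)_\infty$ and $q=e^{2\pi i\tau}$, so that $\eta(2\tau)^2/\eta(\tau)^2$ is a weight-$0$ modular function on $\Gamma_0(2)$. Since $p\ge 5$ is prime we have $p^2\equiv 1\pmod{24}$, so $\tfrac{p^2-1}{12}$ is an (even) integer and the index $pm+\tfrac{p^2-1}{12}$ makes sense. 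Expanding $\eta(2\tau)^2/\eta(\tau)^2$ in powers of $q^{1/12}$, the coefficient $\overline{C}_{4,1}\!\big(pm+\tfrac{p^2-1}{12}\big)$ sits at the exponent $\tfrac{1}{12}\,p(12m+p)$, and the hypothesis $p\nmid m$ (together with $p\nmid 12$) forces $p\nmid(12m+p)$. This is the structural reason the offset $\tfrac{p^2-1}{12}$ is the correct one: it places the target coefficient at a multiple of $p$ in the $q^{1/12}$-grading whose cofactor is coprime to $p$, i.e.\ at a genuine $U_p$-coefficient.

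Next I would carry out the $2$-adic reduction that turns this weight-$0$ object into one of positive weight. Writing $a=\delta_p-1$ (the case $p\equiv 1\pmod{12}$, i.e.\ $a=0$, being trivial, so that $a\in\{4,6,10\}$ in the substantive cases), the elementary congruence $(q;q)_\infty^{2^a}\equiv(q^2;q^2)_\infty^{2^{a-1}}\pmod{2^a}$ — obtained by iterating $(1-q)^2\equiv 1-q^2\pmod 2$ — together with invertibility of these unit power series modulo $2^a$ lets me clear the denominator and obtain
\[
\sum_{n=0}^\infty \overline{C}_{4,1}(n)q^n\equiv (q;q)_\infty^{2^a-2}\,(q^2;q^2)_\infty^{2-2^{a-1}}\pmod{2^a}.
\]
In eta notation this is $q^{-1/12}\,\eta(\tau)^{2^a-2}\eta(2\tau)^{2-2^{a-1}}$, a weakly holomorphic modular form of integer weight $2^{a-2}$ on $\Gamma_0(2)$ whose Fourier coefficients agree with $\overline{C}_{4,1}(n)$ modulo $2^a$. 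Thus the problem is reduced to a statement about a single eta quotient with integral coefficients, read modulo $2^{\delta_p-1}$.

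Finally, following the proof of Theorem 1 of \cite{SCC} almost verbatim, I would apply the Hecke operator $U_p$ to this reduced form. One checks first that $U_p$ does not annihilate the target coefficient for trivial support reasons — since every unit of $\mathbb{Z}/12\mathbb{Z}$ is its own inverse, $12m+p\equiv p\equiv p^{-1}\pmod{12}$ lies in the arithmetic progression supporting $U_p$ of an $\eta(2\tau)^2/\eta(\tau)^2$-type series, so the vanishing is a genuine Hecke phenomenon rather than an artifact of the grading. Then, because $p\nmid(12m+p)$ removes the $V_p$-contribution from the Hecke recursion $a(pN)=\lambda_p\,a(N)-p^{k-1}a(N/p)$ on weight-$k=2^{a-2}$ forms of level $2$, and because of the $2$-adic divisibility of the relevant Hecke eigenvalues established in \cite{SCC}, the coefficient at $pm+\tfrac{p^2-1}{12}$ is congruent to $0$ modulo $2^a=2^{\delta_p-1}$.

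The main obstacle I anticipate is the bookkeeping in this last step rather than any single new idea. One must control the behaviour of the weakly holomorphic eta quotient $\eta(\tau)^{2^a-2}\eta(2\tau)^{2-2^{a-1}}$ at the two cusps of $\Gamma_0(2)$ so that its pole does not interfere with the vanishing, and one must verify that the $U_p$-action yields the claimed $2$-adic divisibility uniformly in the weight $2^{a-2}$ as $\delta_p$ ranges over $5,7,11$. This is precisely the technical content imported (with the minor modification of tracking the eta exponents $(2,-2)$) from \cite{SCC}; once it is in place, the numerical cases $p=5,7,11$ recorded in the corollary below follow immediately.
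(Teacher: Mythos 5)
Your proposal is correct and follows essentially the same route as the paper: the paper proves this corollary simply by specializing its preceding theorem, $\overline{C}_{4,1}\left(pm+\frac{p^2-1}{12}\right)\equiv 0\pmod{2^{\delta_p-1}}$ for $p\nmid m$, to $p=5,7,11$, and that theorem is itself obtained exactly as you describe — by viewing $(q^2;q^2)_\infty^2/(q;q)_\infty^2$ as an eta quotient of weight $0$ on $\Gamma_0(2)$ and adapting the proof of Theorem 1 of \cite{SCC}. Your additional sketch of the $2$-adic reduction and $U_p$ argument is a plausible reconstruction of the machinery the paper imports from \cite{SCC}, and your numerical instantiations ($\delta_p=5,7,11$ with offsets $2,4,10$) match the paper's.
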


\section{Results for $\overline{C}_{6,1}(n)$ and $\overline{C}_{6,2}(n)$}
\label{C6_section}
Next, we consider the two functions $\overline{C}_{6,1}(n)$ and $\overline{C}_{6,2}(n).$  We begin by proving an elementary result for $\overline{C}_{6,1}(n)$ modulo 3. 

\begin{theorem}
\label{C61mod3}
If $n$ cannot be represented as the sum of a pentagonal number and twice a triangular 
number, or if $n$ cannot be represented as the sum of a triangular number and four times a pentagonal number, then
$\overline{C}_{6,1}(n)\equiv0\pmod3$.
\end{theorem}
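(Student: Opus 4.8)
The plan is to mirror the two-pronged strategy of Theorem~\ref{C41mod4}: I would reduce the generating function for $\overline{C}_{6,1}(n)$ modulo $3$ to a product of two theta functions, and then carry out the same reduction for the sign-twisted series $\sum_{n\ge0}(-1)^n\overline{C}_{6,1}(n)q^n$. Each reduction produces exactly one of the two representation conditions in the statement. Since failure of either condition already forces the coefficient of $q^n$ to vanish modulo $3$, the ``or'' in the theorem is precisely the union of the two sufficient conditions, so proving both congruences finishes the proof.

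First I would record an eta-quotient for the generating function. Starting from (\ref{main_genfn}) with $k=6$, $i=1$ and using $(-q;q^2)_\infty=(-q;q^6)_\infty(-q^3;q^6)_\infty(-q^5;q^6)_\infty$ to rewrite $(-q;q^6)_\infty(-q^5;q^6)_\infty$, routine product manipulations give
$$
\sum_{n=0}^\infty \overline{C}_{6,1}(n)q^n = \frac{(q^2;q^2)_\infty^2 (q^3;q^3)_\infty (q^{12};q^{12})_\infty}{(q;q)_\infty^2 (q^4;q^4)_\infty (q^6;q^6)_\infty}.
$$
Applying $(q^m;q^m)_\infty^3\equiv(q^{3m};q^{3m})_\infty\pmod3$ to the factors $(q^3;q^3)_\infty$, $(q^6;q^6)_\infty$ and $(q^{12};q^{12})_\infty$, I expect heavy cancellation to leave
$$
\sum_{n=0}^\infty \overline{C}_{6,1}(n)q^n \equiv \frac{(q;q)_\infty (q^4;q^4)_\infty^2}{(q^2;q^2)_\infty}\pmod3.
$$
Here $(q;q)_\infty=\sum_{k}(-1)^kq^{k(3k-1)/2}$ is the pentagonal-number series, while $(q^4;q^4)_\infty^2/(q^2;q^2)_\infty=\sum_{n\ge0}q^{n(n+1)}$ generates twice the triangular numbers; hence the coefficient of $q^n$ is $\equiv0\pmod3$ whenever $n$ is not a pentagonal number plus twice a triangular number.

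For the second condition I would replace $q$ by $-q$ in the eta-quotient, using $(-q;-q)_\infty=(q^2;q^2)_\infty^3/\bigl((q;q)_\infty(q^4;q^4)_\infty\bigr)$ together with its analogue at argument $q^3$, and then reduce modulo $3$ exactly as above. I expect this to yield
$$
\sum_{n=0}^\infty (-1)^n\overline{C}_{6,1}(n)q^n \equiv \frac{(q^2;q^2)_\infty^2 (q^4;q^4)_\infty}{(q;q)_\infty}\pmod3,
$$
in which $(q^2;q^2)_\infty^2/(q;q)_\infty=\sum_{n\ge0}q^{n(n+1)/2}$ generates the triangular numbers and $(q^4;q^4)_\infty=\sum_{k}(-1)^kq^{2k(3k-1)}$ generates four times the pentagonal numbers. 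Thus $\overline{C}_{6,1}(n)\equiv0\pmod3$ whenever $n$ is not a triangular number plus four times a pentagonal number, and combining the two cases gives the theorem.

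The main obstacle is bookkeeping rather than conceptual. The delicate points are getting both eta-quotients exactly right---especially tracking the substitutions $(-q;-q)_\infty$ and $(-q^3;-q^3)_\infty$ under $q\mapsto -q$---and then checking that the cube-congruence cancellations genuinely collapse each quotient to a clean product of two theta functions. I would verify each reduced quotient against its $q$-expansion to a dozen terms, since a single mishandled factor of $(q^2;q^2)_\infty$ or $(q^4;q^4)_\infty$ would silently alter the stated representation conditions.
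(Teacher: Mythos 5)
Your proposal is correct and follows essentially the same route as the paper's proof: the same eta-quotient $\frac{(q^2;q^2)_\infty^2(q^3;q^3)_\infty(q^{12};q^{12})_\infty}{(q;q)_\infty^2(q^4;q^4)_\infty(q^6;q^6)_\infty}$, the same reduction via $(q^m;q^m)_\infty^3\equiv(q^{3m};q^{3m})_\infty\pmod3$ to $(q;q)_\infty\psi(q^2)$, and the same $q\mapsto-q$ substitution yielding $(q^4;q^4)_\infty\psi(q)$ for the second condition. Both of your ``expected'' cancellations do in fact hold, exactly as in the paper.
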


\begin{proof}
Beginning with (\ref{main_genfn}), we see that 
\begin{eqnarray*}
\sum_{n=0}^\infty\overline{C}_{6,1}(n)q^n
&=&
\frac{(q^6;q^6)_\infty(-q;q^6)_\infty(-q^5;q^6)_\infty}{(q;q)_\infty}\\
&=&
\frac{(q^6;q^6)_\infty(-q;q^2)_\infty}{(q;q)_\infty(-q^3;q^6)_\infty}\\
&=&
\frac{(q^6;q^6)_\infty(q^2;q^4)_\infty(q^3;q^6)_\infty}{(q;q)_\infty(q;q^2)_\infty(q^6;q^{12})_\infty}\\
&=&
\frac{(q^2;q^2)_\infty^2(q^3;q^3)_\infty(q^{12};q^{12})_\infty}
{(q;q)_\infty^2(q^4;q^4)_\infty(q^6;q^6)_\infty}\\
&\equiv &
\frac{(q^2;q^2)_\infty^2(q;q)_\infty^3(q^4;q^4)_\infty^3}{(q;q)_\infty^2(q^4;q^4)_\infty(q^2;q^2)_\infty^3}\pmod3\\
&=&
\frac{(q;q)_\infty(q^4;q^4)_\infty^2}{(q^2;q^2)_\infty}\\
&=& 
(q;q)_\infty\psi(q^2)\\
&=& 
\sum_{k=-\infty}^\infty\sum_{\ell=1}^\infty (-1)^kq^{\frac{k(3k-1)}{2}+\ell(\ell-1)},
\end{eqnarray*}
and 
\begin{eqnarray*}
\sum_{n=0}^\infty (-1)^n\overline{C}_{6,1}(n)q^n
&\equiv &
\frac{(-q;q^2)_\infty(q^2;q^2)_\infty(q^4;q^4)_\infty^2}{(q^2;q^2)_\infty}\pmod{3}\\
&=& 
\frac{(q^2;q^4)_\infty(q^4;q^4)_\infty^2}{(q;q^2)_\infty}\\
&=&
\frac{(q^2;q^2)_\infty^2(q^4;q^4)_\infty}{(q;q)_\infty}\\
&=& 
(q^4;q^4)_\infty\psi(q)\\
&=&
\sum_{k=-\infty}^\infty\sum_{\ell=1}^\infty (-1)^kq^{2k(3k-1)+\frac{\ell(\ell-1)}{2}}.
\end{eqnarray*}
Here we have used Ramanujan's theta function $\psi(q)$ which satisfies 
$$\psi(q)=\sum_{\ell=1}^\infty q^{\frac{\ell(\ell-1)}{2}}=\frac{(q^2;q^2)^2_\infty}{(q;q)_\infty}.$$
The result follows.  
\end{proof}

From Theorem \ref{C61mod3}, 
we can obtain the following  set of Ramanujan--like congruences modulo 3 for $\overline{C}_{6,1}(n).$

\begin{corollary}
\label{corC61mod3}
Let $p\geq 5$ be prime and $p\not\equiv 1,\ 7 \pmod{24}.$  Then for $k, m\ge0$ with $p\nmid m$,
$$
\overline{C}_{6,1}\left (p^{2k+1}m+7\times\frac{p^{2k+2}-1}{24}\right )\equiv0\pmod3.
$$
\end{corollary}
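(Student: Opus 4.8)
The plan is to convert each of the two congruences established in the proof of Theorem \ref{C61mod3} into a statement about a binary quadratic form, and then to rule out representability of $N := 24n+7$ by the relevant form via a descent argument based on a quadratic nonresidue. Writing $n = p^{2k+1}m + 7\cdot\frac{p^{2k+2}-1}{24}$, I would first record the arithmetic of $N$: a direct computation gives
\[
24n+7 = 24p^{2k+1}m + 7p^{2k+2} = p^{2k+1}\left(24m + 7p\right),
\]
and since $p\geq 5$ and $p\nmid m$ we have $p\nmid(24m+7p)$, so $\nu_p(N) = 2k+1$ is odd. This odd valuation is the feature exploited in every case.

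Next I would translate the two representations from Theorem \ref{C61mod3}. Multiplying the exponent $\tfrac{r(3r-1)}{2}+s(s-1)$ by $24$ and completing the square yields $N = (6r-1)^2 + 6(2s-1)^2$, so that ``$n$ is a pentagonal number plus twice a triangular number'' is equivalent to $N = x^2 + 6y^2$ for suitable $x,y$. Similarly, multiplying the exponent $2r(3r-1)+\tfrac{s(s-1)}{2}$ by $24$ gives $N = (2(6r-1))^2 + 3(2s-1)^2$, so that ``$n$ is a triangular number plus four times a pentagonal number'' is equivalent to $N = X^2 + 3Y^2$. By Theorem \ref{C61mod3} it therefore suffices to show that, for the $n$ in question, $N$ is not of the form $x^2+6y^2$ or not of the form $X^2+3Y^2$.

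I would then split on the residue of $p$ modulo $24$, running over the reduced residues excluded by the hypothesis $p\not\equiv 1,7\pmod{24}$, namely $5,11,13,17,19,23$. If $p\equiv 13,17,19,23\pmod{24}$, then $-6$ is a quadratic nonresidue modulo $p$, so a relation $N = x^2+6y^2$ with $p\nmid y$ would give $(xy^{-1})^2\equiv -6\pmod p$, a contradiction; hence $p\mid x$ and $p\mid y$, and dividing by $p^2$ and descending forces $\nu_p(N)$ to be even, contradicting the oddness above. Thus $N\neq x^2+6y^2$ and the first clause of Theorem \ref{C61mod3} applies. If instead $p\equiv 5,11\pmod{24}$, then $p\equiv 2\pmod 3$, so $\left(\tfrac{-3}{p}\right) = -1$, and the identical descent applied to $N = X^2 + 3Y^2$ (now forcing $p\mid X$ and $p\mid Y$) again contradicts $\nu_p(N)$ odd, so the second clause of Theorem \ref{C61mod3} applies. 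In either case $\overline{C}_{6,1}(n)\equiv 0\pmod 3$.

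The step I expect to be the main obstacle is the case split itself. One is tempted to use only the first representation, but $-6$ is a quadratic residue modulo the primes $p\equiv 5,11\pmod{24}$, so the descent on $x^2+6y^2$ breaks down for them; worse, since $N\equiv 7\pmod{24}$ lies in the principal genus of discriminant $-24$, genus theory does not forbid a representation $N=x^2+6y^2$ in those cases. Indeed, for $p=5,\,k=0,\,m=1$ one gets $n=12$, and $24\cdot 12+7 = 295 = 17^2 + 6\cdot 1^2$, so this $n$ genuinely is a pentagonal number plus twice a triangular number. The resolution is that these residues satisfy $p\equiv 2\pmod 3$, which places them under the form $x^2+3y^2$ of discriminant $-12$ — a form of class number one, for which the nonresidue descent is unobstructed. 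Recognizing that the two representations in Theorem \ref{C61mod3} are precisely what is needed to cover the two halves of the excluded residue classes is the crux of the argument.
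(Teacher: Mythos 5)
Your proposal is correct and follows essentially the same route as the paper's own proof: the identical split into residue classes $13,17,19,23$ versus $5,11 \pmod{24}$, the same translations $24n+7=(6k-1)^2+6(2\ell-1)^2$ and $24n+7=(12k-2)^2+3(2\ell-1)^2$, and the same odd-$p$-adic-valuation descent using $\left(\frac{-6}{p}\right)=-1$ and $\left(\frac{-3}{p}\right)=-1$ respectively. You in fact spell out two details the paper leaves implicit (the factorization $24n+7=p^{2k+1}(24m+7p)$ and the descent itself), and your example $n=12$, $295=17^2+6\cdot 1^2$ nicely documents why the case split is genuinely unavoidable.
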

\begin{proof}
First suppose $p$ is prime, $p\equiv13,\ 17,\ 19$ or $23\pmod{24}.$
We have
$$
\sum_{n=0}^\infty\overline{C}_{6,1}(n)q^{24n+7}\equiv\sum_{k=-\infty}^\infty\sum_{\ell=1}^\infty
q^{(6k-1)^2+6(2\ell-1)^2}\pmod3.
$$
So, if $24n+7$ is not of the form $(6k-1)^2+6(2\ell-1)^2$, then $\overline{C}_{6,1}(n)\equiv0\pmod3$.

If $N$ is of the form $x^2+6y^2$, then $\nu_p(N)$ is even since $\displaystyle\left (\frac{-6}{p}\right )=-1.$
However, here
$
n=p^{2k+1}m+7\times\displaystyle\frac{p^{2k+2}-1}{24},
$
and
$\nu_p(24n+7)$ is odd. 
So $24n+7$ is not of the form $x^2+6y^2$, and $\overline{C}_{6,1}(n)\equiv0\pmod3$.

Now suppose $p$ is prime, $p\equiv 5$ or $11\pmod{24}.$ 
We have
$$
\sum_{n\ge0}(-1)^n\overline{C}_{6,1}(n)q^{24n+7}\equiv\sum_{k=-\infty}^\infty\sum_{\ell=1}^\infty (-1)^kq^{(12k-2)^2+3(2\ell-1)^2}.
$$
If $N$ is of the form $x^2+3y^2$, 
then $\nu_p(N)$ is even since $\displaystyle\left (\frac{-3}{p}\right )=-1$. However, $\nu_p(24n+7)$ is odd, so $24n+7$ is not of the form 
$x^2+3y^2.$ Therefore, we can conclude that $\overline{C}_{6,1}(n)\equiv0\pmod3$.
\end{proof}

We now transition to a similar analysis of $\overline{C}_{6,2}(n).$  

\begin{theorem}
\label{C62mod2}
For all $n\geq 1,$ 
\begin{equation*}
\overline{C}_{6,2}(n) \equiv 
\begin{cases}
1\pmod{2} & \text{if } n \text{\ is a pentagonal number},\\
0\pmod{2} & \text{otherwise. }
\end{cases}
\end{equation*}
\end{theorem}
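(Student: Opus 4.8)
The plan is to mimic the proof of Theorem \ref{C41mod2}: I would first reduce the generating function for $\overline{C}_{6,2}(n)$ to $(q;q)_\infty$ modulo $2$ and then invoke Euler's Pentagonal Number Theorem. Starting from (\ref{main_genfn}) with $k=6$ and $i=2$, I would write
$$\sum_{n=0}^\infty \overline{C}_{6,2}(n)q^n = \frac{(q^6;q^6)_\infty(-q^2;q^6)_\infty(-q^4;q^6)_\infty}{(q;q)_\infty}.$$
The first task is to collapse the two ``overlined'' factors. Since the even-indexed product $(-q^2;q^2)_\infty = \prod_{j\ge 1}(1+q^{2j})$ splits according to the residue of the exponent modulo $6$, I expect the identity $(-q^2;q^6)_\infty(-q^4;q^6)_\infty = (-q^2;q^2)_\infty/(-q^6;q^6)_\infty$. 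Rewriting each $(-q^a;q^a)_\infty$ as $(q^{2a};q^{2a})_\infty/(q^a;q^a)_\infty$, I anticipate arriving at an eta--quotient of the shape
$$\sum_{n=0}^\infty \overline{C}_{6,2}(n)q^n = \frac{(q^6;q^6)_\infty^2(q^4;q^4)_\infty}{(q;q)_\infty(q^2;q^2)_\infty(q^{12};q^{12})_\infty}.$$

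Next I would reduce modulo $2$ using the elementary congruence $(q^a;q^a)_\infty^2 \equiv (q^{2a};q^{2a})_\infty \pmod 2$, which comes from $(1-q^j)^2 \equiv 1-q^{2j}\pmod 2$. Applying this three times in succession, I expect a telescoping cascade: first $(q^6;q^6)_\infty^2 \equiv (q^{12};q^{12})_\infty$ cancels the $(q^{12};q^{12})_\infty$ in the denominator, then $(q^4;q^4)_\infty \equiv (q^2;q^2)_\infty^2$ cancels a factor of $(q^2;q^2)_\infty$, and finally $(q^2;q^2)_\infty \equiv (q;q)_\infty^2$ cancels a factor of $(q;q)_\infty$, leaving
$$\sum_{n=0}^\infty \overline{C}_{6,2}(n)q^n \equiv (q;q)_\infty \pmod 2.$$

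With that identity in hand, the conclusion is immediate from Euler's Pentagonal Number Theorem \cite[Corollary 1.7]{A}, which gives $(q;q)_\infty = \sum_{k=-\infty}^\infty (-1)^k q^{k(3k-1)/2} \equiv \sum_{k=-\infty}^\infty q^{k(3k-1)/2} \pmod 2$; since the map $k \mapsto k(3k-1)/2$ is a bijection from $\mathbb{Z}$ onto the (generalized) pentagonal numbers, each such number is hit exactly once, forcing $\overline{C}_{6,2}(n)$ to be odd precisely when $n$ is pentagonal. I do not expect any serious obstacle here: the only delicate point is the bookkeeping in the first paragraph, namely correctly splitting the overlined factors modulo $6$ and tracking the resulting eta--quotient so that the modulo-$2$ cancellations in the second paragraph really do collapse everything to $(q;q)_\infty$. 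Everything else is routine $q$--series manipulation of the same flavor as the earlier theorems in this section.
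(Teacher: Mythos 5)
Your proposal is correct and follows essentially the same route as the paper's proof: the same splitting $(-q^2;q^6)_\infty(-q^4;q^6)_\infty = (-q^2;q^2)_\infty/(-q^6;q^6)_\infty$, the same eta--quotient $\frac{(q^4;q^4)_\infty(q^6;q^6)_\infty^2}{(q;q)_\infty(q^2;q^2)_\infty(q^{12};q^{12})_\infty}$, the same reduction to $(q;q)_\infty$ via $(q^a;q^a)_\infty^2\equiv(q^{2a};q^{2a})_\infty\pmod2$, and the same appeal to Euler's Pentagonal Number Theorem. The only cosmetic difference is that you cancel factors in a cascade while the paper rewrites everything in terms of $(q;q)_\infty$ and $(q^{12};q^{12})_\infty$ at once; your added remark on the injectivity of $k\mapsto k(3k-1)/2$ is a nice touch the paper leaves implicit.
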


\begin{proof}
Beginning with (\ref{main_genfn}), we have 
\begin{eqnarray*}
\sum_{n=0}^\infty\overline{C}_{6,2}(n)q^n
&=&
\frac{(q^6;q^6)_\infty(-q^2;q^6)_\infty(-q^4;q^6)_\infty}{(q;q)_\infty}\\
&=&
\frac{(q^6;q^6)_\infty(-q^2;q^2)_\infty}{(q;q)_\infty(-q^6;q^6)_\infty}\\
&=& 
\frac{(q^6;q^6)_\infty(q^4;q^4)_\infty(q^6;q^6)_\infty}{(q;q)_\infty(q^2;q^2)_\infty(q^{12};q^{12})_\infty}\\
&=& 
\frac{(q^4;q^4)_\infty(q^6;q^6)_\infty^2}{(q;q)_\infty(q^2;q^2)_\infty(q^{12};q^{12})_\infty}\\
&\equiv &
\frac{(q;q)_\infty^4(q^{12};q^{12})_\infty}{(q;q)_\infty(q;q)_\infty^2(q^{12};q^{12})_\infty}\pmod2\\
&=& 
(q;q)_\infty\\
&\equiv & 
\sum_{k= -\infty}^\infty q^{\frac{k(3k-1)}{2}}\pmod2.
\end{eqnarray*}
\end{proof}

\begin{corollary}
Let $p\geq 5$ be prime and let $1\leq r\leq p-1$ with $24r+1$ a quadratic nonresidue modulo $p.$  Then, for all $m\geq 0,$ 
$$
\overline{C}_{6,2}(pm+r) \equiv 0\pmod{2}.
$$
\end{corollary}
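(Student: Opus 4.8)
The plan is to reduce the statement to a congruence condition on $24n+1$ and then invoke the quadratic nonresidue hypothesis, exactly in the spirit of the corollaries following Theorems \ref{C41mod2} and \ref{C62mod2}. By Theorem \ref{C62mod2}, $\overline{C}_{6,2}(n)$ is odd if and only if $n$ is a (generalized) pentagonal number, that is, $n = k(3k-1)/2$ for some integer $k$. So it suffices to show that under the stated hypotheses no integer of the form $n = pm+r$ can be pentagonal.

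First I would record the standard identity obtained by completing the square: if $n = k(3k-1)/2$ then
$$
24n+1 = 12k(3k-1)+1 = 36k^2 - 12k + 1 = (6k-1)^2.
$$
Thus $n$ is pentagonal only if $24n+1$ is a perfect square. This is the same device that produces the factor $12r+1$ in the corollary to Theorem \ref{C41mod2}, now with $24$ in place of $12$ because the underlying series is the pentagonal-number generating function $(q;q)_\infty$ rather than $\psi$.

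Next I would compute the residue of $24n+1$ modulo $p$ for $n = pm+r$. We have
$$
24n+1 = 24pm + 24r + 1 \equiv 24r+1 \pmod{p}.
$$
By hypothesis $24r+1$ is a quadratic nonresidue modulo $p$; in particular $24r+1 \not\equiv 0 \pmod{p}$ and $24r+1$ is not a square modulo $p$. Consequently $24n+1$ is neither divisible by $p$ nor a quadratic residue modulo $p$, so $24n+1$ cannot be a perfect square. By the identity above, $n$ is therefore not pentagonal, and Theorem \ref{C62mod2} gives $\overline{C}_{6,2}(pm+r) \equiv 0 \pmod{2}$.

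There is essentially no serious obstacle here: the argument is elementary once Theorem \ref{C62mod2} is in hand. The only point requiring a moment's care is the logical step that a perfect integer square is always a quadratic residue (or zero) modulo $p$, so that a genuine nonresidue modulo $p$ rules out squareness; this is precisely what makes the nonresidue hypothesis do its work. I would also note that $p \geq 5$ guarantees $\gcd(24,p)=1$, so that the map $r \mapsto 24r+1$ is a bijection on residues modulo $p$ and the nonresidue condition is meaningful.
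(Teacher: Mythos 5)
Your proof is correct and follows essentially the same route as the paper: both reduce oddness of $\overline{C}_{6,2}(n)$ via Theorem \ref{C62mod2} to $n$ being pentagonal, recast this as $24n+1$ being a perfect square (the paper does this by writing the generating function in the variable $q^{24n+1}$, you by completing the square directly), and then use the nonresidue hypothesis on $24r+1$ to rule out squareness. Your write-up is simply a more explicit version of the paper's argument, spelling out the steps the paper leaves implicit.
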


\begin{proof}
We have
$$
\sum_{n=0}^\infty\overline{C}_{6,2}(n)q^{24n+1}\equiv\sum_{k= -\infty}^\infty q^{(6k-1)^2}.
$$
Here, $n=pm+r$, so $24n+1=24pm+24r+1\equiv24r+1\pmod{p}$ is not a square modulo $p.$  Thus, $24n+1$ is not a square, and $\overline{C}_{6,2}(n)\equiv0\pmod2.$
\end{proof}

We close this section by considering $\overline{C}_{6,2}(n)$ modulo 3.  

\begin{corollary}
\label{C62mod3}
If $n$ cannot be represented as the sum of a pentagonal number and a square, or if $n$ cannot be written as the sum of a pentagonal number and twice a square, then
$$\overline{C}_{6,2}(n)\equiv0\pmod3.$$
\end{corollary}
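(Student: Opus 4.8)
The plan is to run the same strategy as in the proof of Theorem \ref{C61mod3}, reducing the generating function for $\overline{C}_{6,2}(n)$ modulo 3 by means of the Frobenius congruence $(1-x)^3 \equiv 1-x^3 \pmod 3$. First I would return to the intermediate form of the generating function obtained in the proof of Theorem \ref{C62mod2}, namely
$$
\sum_{n=0}^\infty\overline{C}_{6,2}(n)q^n = \frac{(q^4;q^4)_\infty(q^6;q^6)_\infty^2}{(q;q)_\infty(q^2;q^2)_\infty(q^{12};q^{12})_\infty},
$$
rather than its mod-2 reduction. Applying $(q^2;q^2)_\infty^3 \equiv (q^6;q^6)_\infty$ and $(q^4;q^4)_\infty^3 \equiv (q^{12};q^{12})_\infty \pmod 3$ to rewrite $(q^6;q^6)_\infty^2$ and $(q^{12};q^{12})_\infty$, everything should collapse to $\dfrac{(q^2;q^2)_\infty^5}{(q;q)_\infty(q^4;q^4)_\infty^2}$, which in turn factors as $(q;q)_\infty\varphi(q)$ once one recognizes $\varphi(q) = \dfrac{(q^2;q^2)_\infty^5}{(q;q)_\infty^2(q^4;q^4)_\infty^2}$ (the representation already used in the Lemma above).

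Once I have $\sum_{n\ge0}\overline{C}_{6,2}(n)q^n \equiv (q;q)_\infty\varphi(q) \pmod 3$, I would expand using Euler's Pentagonal Number Theorem together with (\ref{phi_defn}) to write
$$
(q;q)_\infty\varphi(q) = \sum_{k=-\infty}^\infty\sum_{\ell=-\infty}^\infty (-1)^k q^{\frac{k(3k-1)}{2}+\ell^2}.
$$
The exponent here is a pentagonal number plus a square, so if $n$ admits no such representation the coefficient of $q^n$ vanishes modulo 3, which gives the first case. For the second case I would replace $q$ by $-q$, so that $\sum_{n\ge0}(-1)^n\overline{C}_{6,2}(n)q^n$ is governed by $(q;q)_\infty\varphi(q)$ with $q\mapsto -q$; using $(-q;-q)_\infty = \dfrac{(q^2;q^2)_\infty^3}{(q;q)_\infty(q^4;q^4)_\infty}$ and $\varphi(-q) = \dfrac{(q;q)_\infty^2}{(q^2;q^2)_\infty}$, this should simplify to $(q;q)_\infty\varphi(-q^2)$, whose exponents $\frac{k(3k-1)}{2}+2\ell^2$ run over sums of a pentagonal number and twice a square. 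Since the factor $(-1)^n$ does not affect whether a coefficient is $\equiv 0\pmod 3$, the second case follows.

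I expect the only real work to be the bookkeeping in the two $q$-product simplifications: first verifying that the mod-3 reduction telescopes exactly to $(q;q)_\infty\varphi(q)$, and then carrying out the $q\mapsto -q$ substitution correctly to land on $(q;q)_\infty\varphi(-q^2)$ rather than some other theta combination. Neither step is conceptually difficult, but the cancellations among the many $(q^d;q^d)_\infty$ factors must be tracked with care; everything afterward is an immediate reading-off of coefficients, exactly as in the proofs of Theorems \ref{C61mod3} and \ref{C62mod2}.
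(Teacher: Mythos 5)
Your proposal is correct and follows essentially the same route as the paper's proof: the same mod-3 reduction of the product form to $(q;q)_\infty\varphi(q)$ via the cubing congruences, and the same $q\mapsto -q$ substitution collapsing to $(q;q)_\infty\varphi(-q^2)$ for the second case (the paper merely writes $(-q;-q)_\infty$ as $(-q;q^2)_\infty(q^2;q^2)_\infty$ before simplifying, which is the identity you use). The coefficient-reading conclusion is identical, so there is nothing substantive to add.
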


\begin{proof}
We have
\begin{eqnarray*}
\sum_{n=0}^\infty\overline{C}_{6,2}(n)q^n
&=& 
\frac{(q^4;q^4)_\infty(q^6;q^6)_\infty^2}{(q;q)_\infty(q^2;q^2)_\infty(q^{12};q^{12})_\infty}\\
&\equiv & 
\frac{(q^4;q^4)_\infty(q^2;q^2)_\infty^6}{(q;q)_\infty(q^2;q^2)_\infty(q^4;q^4)_\infty^3}\pmod3\\
&=& 
\frac{(q^2;q^2)_\infty^5}{(q;q)_\infty(q^4;q^4)_\infty^2}\\
&=& 
(q;q)_\infty\varphi(q)\\
&=& 
\sum_{k,\ell=-\infty}^\infty (-1)^kq^{\frac{k(3k-1)}{2}+\ell^2}
\end{eqnarray*}
and
\begin{eqnarray*}
\sum_{n=0}^\infty(-1)^n\overline{C}_{6,2}(n)q^n
&\equiv & 
(-q;q^2)_\infty(q^2;q^2)_\infty\varphi(-q) \pmod{3}\\
&=& 
(-q;q^2)_\infty(q^2;q^2)_\infty\frac{(q;q)_\infty^2}{(q^2;q^2)_\infty}\\
&=& 
\frac{(q;q)_\infty^2(q^2;q^4)_\infty}{(q;q^2)_\infty}\\
&=& 
\frac{(q;q)_\infty^2(q^2;q^2)_\infty^2}{(q;q)_\infty(q^4;q^4)_\infty}\\
&=& 
\frac{(q;q)_\infty(q^2;q^2)_\infty^2}{(q^4;q^4)_\infty}\\
&=& 
(q;q)_\infty\varphi(-q^2)\\
&=& 
\sum_{k,\ell=-\infty}^\infty (-1)^l q^{\frac{k(3k-1)}{2}+2\ell^2}.
\end{eqnarray*}
The result follows.  
\end{proof}
We close our paper by demonstrating an infinite family of Ramanujan--like congruences satisfied by $\overline{C}_{6,2}(n)$ modulo 3.  
\begin{corollary}
\label{corC62mod3}
Let $p\geq 5$ be prime, $p\not\equiv1$ or $7\pmod{24}$, then for all $k,m\ge0$ with $p\nmid m$,
$$
\overline{C}_{6,2}\left (p^{2k+1}m+\frac{p^{2k+2}-1}{24}\right )\equiv0\pmod3.
$$
\end{corollary}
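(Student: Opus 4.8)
The plan is to imitate the proof of Corollary~\ref{corC61mod3}, using the two congruences for $\overline{C}_{6,2}(n)$ that are already derived inside the proof of Corollary~\ref{C62mod3}. Those give, modulo $3$,
\[
\sum_{n\ge0}\overline{C}_{6,2}(n)q^n \equiv \sum_{k,\ell=-\infty}^\infty(-1)^k q^{\frac{k(3k-1)}{2}+\ell^2}
\]
and
\[
\sum_{n\ge0}(-1)^n\overline{C}_{6,2}(n)q^n \equiv \sum_{k,\ell=-\infty}^\infty(-1)^\ell q^{\frac{k(3k-1)}{2}+2\ell^2}.
\]
Replacing $q$ by $q^{24}$ and multiplying through by $q$ converts a generalized pentagonal number into a square via $24\cdot\frac{k(3k-1)}{2}+1=(6k-1)^2$, so the two series become supported on the exponents $24n+1=(6k-1)^2+24\ell^2$ and $24n+1=(6k-1)^2+48\ell^2$, respectively. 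Consequently $\overline{C}_{6,2}(n)\equiv0\pmod3$ whenever $24n+1$ fails to be of the form $x^2+24y^2$, and likewise whenever $24n+1$ fails to be of the form $x^2+48y^2$.

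First I would record the arithmetic of the shift. With $n=p^{2k+1}m+\frac{p^{2k+2}-1}{24}$, which is an integer because $p^2\equiv1\pmod{24}$ for every prime $p\ge5$, a direct computation gives $24n+1=p^{2k+1}(24m+p)$. Since $p\ge5$ and $p\nmid m$, we have $p\nmid(24m+p)$, so $\nu_p(24n+1)=2k+1$ is odd.

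Next I would split on the residue of $p$ modulo $24$. As $p$ is coprime to $24$ and $p\not\equiv1,7\pmod{24}$, either $p\equiv13,17,19,23\pmod{24}$ or $p\equiv5,11\pmod{24}$. In the first case one checks $\left(\frac{-24}{p}\right)=\left(\frac{-6}{p}\right)=-1$, and in the second case $\left(\frac{-48}{p}\right)=\left(\frac{-3}{p}\right)=-1$. In either case I then apply the standard descent argument: if $N=x^2+Dy^2$ with $p\mid N$, $p\nmid D$, and $\left(\frac{-D}{p}\right)=-1$, then reducing modulo $p$ forces $p\mid y$ and hence $p\mid x$, so $p^2\mid N$ and $N/p^2$ is again of the form $x^2+Dy^2$; iterating shows $\nu_p(N)$ must be even. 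Because $\nu_p(24n+1)$ is odd, $24n+1$ is not of the form $x^2+24y^2$ in the first case ($D=24$) and not of the form $x^2+48y^2$ in the second case ($D=48$). Invoking the corresponding one of the two congruences above then yields $\overline{C}_{6,2}(n)\equiv0\pmod3$.

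The only step requiring genuine care is the Legendre-symbol bookkeeping: I must verify that $\left(\frac{-6}{p}\right)=-1$ holds exactly for $p\equiv13,17,19,23\pmod{24}$ and $\left(\frac{-3}{p}\right)=-1$ holds exactly for $p\equiv5,11,17,23\pmod{24}$, so that the two cases together exhaust all admissible residues $5,11,13,17,19,23$ once the forbidden classes $1$ and $7$ are excluded. Everything else---the $q$-series substitution and the descent---is routine.
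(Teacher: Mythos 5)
Your proposal is correct and follows essentially the same route as the paper: both use the two series congruences from the proof of Corollary \ref{C62mod3}, the substitution $q\mapsto q^{24}$ (times $q$) turning pentagonal numbers into squares $(6k-1)^2$, the computation $24n+1=p^{2k+1}(24m+p)$ giving odd $p$-adic valuation, and the standard descent argument split into the cases $p\equiv 13,17,19,23\pmod{24}$ and $p\equiv 5,11\pmod{24}$. The only cosmetic difference is that you work with the forms $x^2+24y^2$ and $x^2+48y^2$ where the paper writes $x^2+6(2\ell)^2$ and $x^2+3(4\ell)^2$ and quotes $\left(\frac{-6}{p}\right)=-1$ and $\left(\frac{-3}{p}\right)=-1$ directly; since $\left(\frac{-24}{p}\right)=\left(\frac{-6}{p}\right)$ and $\left(\frac{-48}{p}\right)=\left(\frac{-3}{p}\right)$, these are identical.
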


\begin{proof}
First suppose that $p$ is prime, $p\equiv 13,\ 17,\ 19$ or $23\pmod{24}.$  This means $\displaystyle\left (\frac{-6}{p}\right )=-1$.
We have
$$
\sum_{n=0}^\infty\overline{C}_{6,2}(n)q^{24n+1}\equiv\sum_{k,\ell=-\infty}^\infty(-1)^kq^{(6k-1)^2+6(2\ell)^2}.
$$
The proof now goes through just as the proof of the first half of Corollary \ref{corC61mod3}.

Next,  suppose $p$ is prime, $p\equiv 5$ or $11\pmod{24}.$ Then $\displaystyle\left (\frac{-3}{p}\right )=-1$.
We have
$$
\sum_{n=0}^\infty(-1)^n\overline{C}_{6,2}(n)q^{24n+1}\equiv\sum_{k,\ell=-\infty}^\infty(-1)^\ell
q^{(6k-1)^2+3(4\ell)^2}.
$$
The proof now goes through just as the proof of the second half of Corollary \ref{corC61mod3}.
\end{proof}

\bibliographystyle{amsplain}

\begin{thebibliography}{99}

\bibitem{A_sing}  G. E. Andrews, Singular overpartitions, in progress

\bibitem{A} G. E. Andrews, {\it The Theory of Partitions}, Addison-Wesley, Reading 1976; reprinted, Cambridge University Press, Cambridge, 1984, 1998  

\bibitem{by} A. Berkovich and H. Yesilyurt, Ramanujan's identities and representation of integers by certain binary and quaternary quadratic forms, {\it Ramanujan J.} {\bf 20} (2009), 375--408



\bibitem{SCC}  S.-C. Chen, Arithmetic properties of a partition pair function, {\it Int. J. Num. Thy.}, to appear


\bibitem{L}  J. Lovejoy, Gordon's theorem for overpartitions, {\it J. Comb. Thy. A} {\bf 103} (2003), 393--401




\end{thebibliography}

\end{document}